\numberwithin{equation}{section}
\newtheorem{thm}{Theorem}[section]
\newtheorem{lem}[thm]{Lemma}
\newtheorem{prop}[thm]{Proposition}             
\newtheorem{defi}[thm]{Definition}
\newtheorem{rmk}[thm]{Remark}
\newcommand{\wt}{\operatorname{wt}}
\newcommand{\xqz}[1]{\lfloor #1 \rfloor}
\newcommand{\Res}{\operatorname{Res}}
\renewcommand{\mod}{\operatorname{mod}}
\def\Z{\mathbb{Z}}
\def\N{\mathbb{N}}
\def\D{\mathcal{D}}
\begin{document}

\title{Twisted associative algebras associated to vertex algebras}

\author{Shun Xu}

\address{School of Mathematical Sciences, Tongji University, Shanghai, 200092, China}

\email{shunxu@tongji.edu.cn}

\subjclass[2020]{17B69}

\keywords{Vertex algebra, associative algebra, $g$-rationality, $g$-regularity,
twisted fusion rules}

\begin{abstract}
Let $V$ be a vertex  algebra and $g$ an automorphism of $V$ of
order $T$. We construct a sequence of
associative algebras  $\tilde{A}_{g,n}(V )$ for any $n\in(1/T)\N$,
which are not depend on the conformal structure of $V$. We  show that for a vertex operator algebra, $g$-rationality, $g$-regularity,
and twisted fusion rules are independent of the choice of the conformal
vector.
\end{abstract}

\maketitle

\section{Introduction}

For a vertex operator algebra $V$, Zhu \cite{Z1} constructed an associative algebra $A(V)$ and established a one-to-one correspondence between the isomorphism classes of irreducible $A(V)$-modules and those of irreducible admissible $V$-modules.
Later, Frenkel and Zhu \cite{FZ1} constructed an $A(V)$-$A(V)$-bimodule for admissible $V$-modules. Li \cite{L1} used this bimodule to establish a theorem on fusion rules. Subsequently, Y. Zhu \cite{Z2} generalized this result to the twisted case of fusion rules.
Following the idea that one can construct associative algebras from $V$, Dong, Li, and Mason \cite{DLM1} introduced a family of associative algebras $A_n(V)$ for all $n \in \mathbb{N}$, where $A_0(V)=A(V)$, and proved that $V$ is rational if and only if each $A_n(V)$ is finite-dimensional and semisimple.
Later, Dong and Jiang \cite{DJ1} constructed $A_n(V)$-$A_m(V)$-bimodules $A_{n,m}(V)$ for arbitrary $n, m \in \mathbb{N}$, where $A_{n,n}(V)=A_n(V)$, using which they gave a more precise construction of Verma-type admissible $V$-modules. Specifically, for any $A_m(V)$-module $U$, the direct sum
$
\bigoplus_{n \in \mathbb{N}} A_{n,m}(V) \otimes_{A_m(V)} U
$
is a Verma-type admissible $V$-module. It should be emphasized that $A_{n,m}(V)$ admits a definition in terms of representation theory \cite{HJZ1}, and can be realized as a specific subquotient of the universal enveloping algebra $U(V)$ associated with $V$ (see \cite{HX1,HJZ1}).

For vertex operator algebras $(V, \omega)$ and $(V, \omega')$, where $\omega$ and $\omega'$ are both conformal vectors of $V$, a natural question is whether the rationality of $V$ depends on the choice of its conformal vector. Li \cite{L2} constructed a family of associative algebras $\tilde{A}_n(V)$ for all $n \in \mathbb{N}$, associated to the vertex algebra $V$, whose construction does not depend on the choice of conformal vector. If $V$ is a vertex operator algebra and $\omega$ is a conformal vector of $V$, Li proved that $\tilde{A}_n(V)$ is isomorphic to $A_n(V)$ for all $n \in \mathbb{N}$, thereby giving an affirmative answer to the above question. Li also show that for a vertex operator algebra, 
regularity and  fusion rules are independent of the choice of the conformal vector.
H. Li and Wang \cite{LW1} extended this idea to the setting of $\mathbb{Z}$-graded vertex operator superalgebras. In particular, they constructed a family of associative superalgebras $\tilde{A}_n(V)$ indexed by nonnegative integers $n$, associated to a given vertex superalgebra $V$, and these algebras are independent of the choice of conformal structure on $V$. Based
on this, they show that the rationality, regularity and fusion
rules are independent of the choice of the conformal vector
for $\Z$-graded vertex operator superalgebra.

We now consider the twisted representation theory of vertex operator algebras. Let $V$ be a vertex operator algebra and let $g$ be an automorphism of $V$ of finite order $T$. Dong, Li, and Mason \cite{DLM2} constructed an associative algebra $A_g(V)$, where $A_{\operatorname{id}_V}(V) = A(V)$, and established a one-to-one correspondence between the isomorphism classes of irreducible $A_g(V)$-modules and those of irreducible admissible $g$-twisted $V$-modules.
For each $n \in ({1}/{T})\mathbb{N}$, they further introduced a family of associative algebras $A_{g,n}(V)$ \cite{DLM3} such that $A_{g,0}(V) = A_g(V)$ and $A_{\operatorname{id}_V,n}(V) = A_n(V)$. They proved that $V$ is $g$-rational if and only if each $A_{g,n}(V)$ is finite-dimensional and semisimple.
Later, Dong and Jiang \cite{DJ2} constructed $A_{g,n}(V)$-$A_{g,m}(V)$-bimodules $A_{g,n,m}(V)$ for arbitrary $n, m \in ({1}/{T})\mathbb{N}$, with $A_{g,n,n}(V) = A_{g,n}(V)$. Using these bimodules, they provided a more refined construction of Verma-type admissible $g$-twisted $V$-modules. Specifically, for any $A_{g,m}(V)$-module $U$, the direct sum
$
\bigoplus_{n \in ({1}/{T})\mathbb{N}} A_{g,n,m}(V) \otimes_{A_{g,m}(V)} U
$
is a Verma-type admissible $g$-twisted $V$-module.
It should be emphasized that $A_{g,n,m}(V)$ admits a definition from the perspective of representation theory \cite{HXX1} and can be realized as a specific subquotient of the universal enveloping algebra $U(V[g])$ associated to $V$ with respect to the automorphism $g$ (see \cite{HXX1,HX2}).

For vertex operator algebras $(V, \omega)$ and $(V, \omega')$, let $g$ be an automorphism of finite order $T$ of both structures. Naturally, one may ask whether the $g$-rationality of $V$ depends on the choice of its conformal vector. In the case $g = \operatorname{id}_V$, this question was already answered by Li in \cite{L2}.
In this paper, for a vertex  algebra $V$ and  an automorphism $g$ of $V$ of
order $T$, we construct a sequence of associative algebras $\tilde{A}_{g,n}(V)$ for all $n \in ({1}/{T})\mathbb{N}$, which are independent of the conformal structure of $V$. When $V$ is vertex operator algebra, we show that each $\tilde{A}_{g,n}(V)$ is isomorphic to the algebra $A_{g,n}(V)$ constructed by Dong, Li, and Mason. Furthermore, we show that for a vertex operator algebra, properties such as $g$-rationality, $g$-regularity, and twisted fusion rules do not depend on the choice of the conformal vector.

This paper is organized as follows. In Section 2, we review some basic concepts and preliminary results from the twisted representation theory of vertex operator algebras that will be needed in the sequel.
In Section 3, for a vertex algebra $V$ and an automorphism $g$ of $V$ of finite order $T$, we construct a sequence of associative algebras $\tilde{A}_{g,n}(V)$ for all $n \in ({1}/{T})\mathbb{N}$, which are independent of the conformal structure of $V$.
In Section 4, we show that for a vertex operator algebra, the properties of $g$-rationality, $g$-regularity, and twisted fusion rules do not depend on the choice of the conformal vector.
\section{Preliminaries}
In this section, we review some basic concepts and preliminary results from the twisted representation theory of vertex operator algebras that will be needed in the sequel.
\begin{defi}
A vertex algebra consists of a vector space $V$ equipped with a linear map
\begin{align*}
Y(\cdot, x) \colon V &\to (\operatorname{End} V)[[x, x^{-1}]] \\
v &\mapsto Y(v, x) = \sum_{n \in \mathbb{Z}} v_n x^{-n-1}
\end{align*}
and a distinguished vector $\mathbf{1} \in V$, called the vacuum vector, satisfying the following conditions for any $u, v \in V$:

\begin{enumerate}
    \item $Y(\mathbf{1}, z) = \operatorname{id}_V$, and $u_n \mathbf{1} = \delta_{n,-1} u$ for all $n \geq -1$;
    
    \item $u_n v = 0$ for sufficiently large $n$;
    
    \item the Jacobi identity:
    $$
    \begin{gathered}
    z_0^{-1} \delta\left(\frac{z_1 - z_2}{z_0}\right) Y(u, z_1) Y(v, z_2)
    - z_0^{-1} \delta\left(\frac{z_2 - z_1}{-z_0}\right) Y(v, z_2) Y(u, z_1) \\
    = z_2^{-1} \delta\left(\frac{z_1 - z_0}{z_2}\right) Y(Y(u, z_0)v, z_2).
    \end{gathered}
    $$
\end{enumerate}
\end{defi}

Let $V$ be a vertex algebra. Define a linear operator $\mathcal{D}$ on $V$ by
$$
\mathcal{D}(v) = v_{-2} \mathbf{1} = \lim_{x \to 0} \frac{d}{dx} Y(v, x)\mathbf{1}, \quad \text{for } v \in V.
$$
It follows from \cite{LL1} that for any $u, v \in V$,
$$
[\mathcal{D}, Y(v, x)] = Y(\mathcal{D}v, x) = \frac{d}{dx} Y(v, x),
$$
and
\begin{equation}\label{eq2.1}
Y(u, x)v = e^{x\mathcal{D}} Y(v, -x)u.
\end{equation}

\begin{defi}
Let $(V, Y, \mathbf{1})$ be a vertex algebra. A linear isomorphism $g$ of $V$ is called an automorphism of $V$ if it satisfies
$$
g(\mathbf{1}) = \mathbf{1} \quad \text{and} \quad g(Y(u, z)v) = Y(g(u), z)g(v) \quad \text{for all } u, v \in V.
$$
\end{defi}

Let $V$ be a vertex algebra, and fix $g$ to be an automorphism of $V$ of finite order $T$. Denote the imaginary unit by $\sqrt{-1}$. Then $V$ admits the following decomposition with respect to the action of $g$:
$$
V = \bigoplus_{r=0}^{T-1} V^r, \quad \text{where } V^r = \left\{ v \in V \mid gv = e^{-2\pi\sqrt{-1} r / T} v \right\}.
$$
In what follows, when we write $V^r$, we always assume $r \in \{0, 1, \ldots, T-1\}$. Note that $g$ commutes with $\mathcal{D}$, i.e., $g\mathcal{D} = \mathcal{D}g$, which implies $\mathcal{D}(V^r) \subset V^r$ for all $r$.

\begin{defi}
A conformal vector of a vertex algebra $V$ is a vector $\omega \in V$ such that the following conditions hold:

\begin{enumerate}
    \item For any $m, n \in \mathbb{Z}$,
    $$
    [L(m), L(n)] = (m - n)L(m + n) + \frac{m^3 - m}{12} \delta_{m+n, 0} c_V,
    $$
    where $L(n)=\omega_{n+1}
$
and $c_V \in \mathbb{C}$ is the central charge of $V$; and $L(-1) = \mathcal{D}$;

    \item The operator $L(0)$ is semisimple on $V$, i.e., 
    $$
    V = \bigoplus_{n \in \mathbb{C}} V_n, \quad \text{where } L(0)|_{V_n} = n \cdot \operatorname{id}_{V_n}.
    $$
    The eigenvalues of $L(0)$ are called the conformal weights. Moreover, the operators $L(n)$ for $n \geq 1$ act locally nilpotently on $V$.
\end{enumerate}

A vertex algebra $V$ equipped with a conformal vector $\omega$ is called a conformal vertex algebra. A vertex operator algebra is a conformal vertex algebra $V$ with integer conformal weights such that $\dim V_n < \infty$ for all $n \in \mathbb{Z}$, and $V_n = 0$ for sufficiently small $n$.
\end{defi}

Let $V$ be a conformal vertex algebra. For any $n \in \mathbb{C}$, elements in $V_n$ are said to be homogeneous. If $u \in V_n$, we define the weight of $u$ as $\operatorname{wt} u = n$. Whenever the notation $\operatorname{wt} u$ appears, it will always be understood that $u$ is a homogeneous vector.

An automorphism $g$ of $(V,\omega)$ is a vertex algebra automorphism preserving the conformal structure, i.e., $g(\omega) = \omega$. In particular, the definition of an automorphism of a vertex operator algebra coincides with that of an automorphism of the underlying conformal vertex algebra.

\begin{defi}
Let $V$ be a vertex operator algebra. A weak $g$-twisted $V$-module $M$ is a vector space equipped with a linear map
$$
Y_M(\cdot, z) \colon V \to (\operatorname{End} M)\left[\left[z^{1/T}, z^{-1/T}\right]\right],
$$
sending each $u \in V^r$ (for $0 \leq r \leq T - 1$) to 
$
Y_M(u, z) = \sum_{n \in r/T + \mathbb{Z}} u_n z^{-n - 1},
$
such that the following conditions hold:

\begin{enumerate}
    \item $Y_M(\mathbf{1}, z) = \operatorname{id}_M$;
    
    \item For any $u \in V^r$ and $w \in M$, we have $u_{r/T + n} w = 0$ for sufficiently large $n$;
    
    \item The twisted Jacobi identity: for any $u \in V^r$, $v \in V$,
    \begin{align*}
    &z_0^{-1} \delta\left(\frac{z_1 - z_2}{z_0}\right) Y_M(u, z_1) Y_M(v, z_2)
    - z_0^{-1} \delta\left(\frac{z_2 - z_1}{-z_0}\right) Y_M(v, z_2) Y_M(u, z_1) \\
    &\quad = z_2^{-1} \left(\frac{z_1 - z_0}{z_2}\right)^{-r/T} \delta\left(\frac{z_1 - z_0}{z_2}\right) Y_M(Y(u, z_0)v, z_2).
    \end{align*}
\end{enumerate}
\end{defi}

\begin{defi}
Let $V$ be a vertex operator algebra. An admissible $g$-twisted $V$-module is a $(1/T)\mathbb{N}$-graded weak $g$-twisted $V$-module
$
M = \bigoplus_{n \in (1/T)\N} M(n),
$
satisfying the condition
$$
v_m M(n) \subseteq M(n + \wt v - m - 1)
$$
for any  $v \in V$ and  $m, n \in (1/T)\mathbb{Z}$.
\end{defi}

\begin{defi}
Let $V$ be a vertex operator algebra. A (ordinary) $g$-twisted $V$-module is a $\mathbb{C}$-graded weak $g$-twisted $V$-module
$
M = \bigoplus_{\lambda \in \mathbb{C}} M_\lambda,
$
where
$$
M_\lambda = \{w \in M \mid L_M(0)w = \lambda w\},
$$
and $L_M(0)$ is the component operator of $Y_M(\omega, z) = \sum_{n \in \mathbb{Z}} L_M(n) z^{-n - 2}$. We further require that each homogeneous subspace $M_\lambda$ is finite-dimensional, and for each fixed $\lambda \in \mathbb{C}$, we have $M_{n/T + \lambda} = 0$ for all sufficiently negative integers $n$.
\end{defi}

\begin{defi}
A vertex operator algebra $V$ is called $g$-rational if the category of admissible $g$-twisted $V$-modules is semisimple. In particular, $V$ is called rational if it is $1$-rational.
\end{defi}

\begin{defi}
A vertex operator algebra $V$ is called $g$-regular if every weak $g$-twisted $V$-module $M$ is a direct sum of irreducible $g$-twisted $V$-modules.  $V$ is called regular if it is $1$-regular.
\end{defi}

Let $V$ be a vertex operator algebra, and let $g$ be an automorphism of $V$ of finite order $T$. For any $k, l \in \{0, 1, \ldots, T - 1\}$, define $\delta_k(l)$ as follows:
$$
\delta_k(l) =
\begin{cases}
1, & \text{if } k \geq l, \\
0, & \text{if } k < l,
\end{cases}
$$
and set $\delta_k(T) = 0$.
For any $n \in (1/T)\mathbb{N}$, there exists a unique $\bar{n} \in \{0, 1, \ldots, T - 1\}$ such that
$
n = \lfloor n \rfloor + {\bar{n}}/{T},
$
where $\lfloor \cdot \rfloor$ denotes the floor function.

For any $n \in (1/T)\mathbb{N}$, let $O_{g,n}(V)$ be the linear span of all elements of the form $u \circ_{g,n} v$ and $L(-1)u + L(0)u$, where for any $u \in V^r$ and $v \in V$, we define
$$
u \circ_{g,n} v = \operatorname{Res}_z  \frac{(1+z)^{\wt  u - 1 + \delta_{\bar{n}}(r) + \lfloor n \rfloor + r / T}}{z^{2\lfloor n \rfloor + \delta_{\bar{n}}(r) + \delta_{\bar{n}}(T - r) + 1}}Y(u, z)v .
$$
We also define a second product $*_{g,n}$ on $V$ as follows: for $u \in V^r$ and $v \in V$,
$$
u *_{g,n} v = \sum_{m=0}^{\lfloor n \rfloor} (-1)^m \binom{m + \lfloor n \rfloor}{\lfloor n \rfloor} \operatorname{Res}_z   \frac{(1+z)^{\wt  u + \lfloor n \rfloor}}{z^{\lfloor n \rfloor + m + 1}} Y(u, z)v,
$$
if $r = 0$, and $u *_{g,n} v = 0$ if $r > 0$.
Define $A_{g,n}(V)$ to be the quotient $V / O_{g,n}(V)$.
Let $M$ be any weak $g$-twisted $V$-module. Define the linear map $o(\cdot): V \to \operatorname{End} M$ by sending each homogeneous element $v \in V$ to $o(v) = v_{\wt  v - 1}$. For any $n \in (1/T)\mathbb{N}$, define
$$
\Omega_n(M) = \left\{ w \in M \mid v_{\wt  v - 1 + i} w = 0 \text{ for all } v \in V \text{ and all } i \in (1/T)\mathbb{Z} \text{ with } i > n \right\}.
$$

\begin{thm}\label{thm2.9} \cite{DLM3}
Let $V$ be a vertex operator algebra and let $M$ be a weak $g$-twisted $V$-module, with $n \in (1/T)\mathbb{N}$. Then the following statements hold:

\begin{enumerate}
    \item The subspace $O_{g,n}(V)$ is a two-sided ideal of $V$ with respect to the product $\circ_{g,n}$. Moreover, the quotient space $A_{g,n}(V)$ becomes an associative algebra under $*_{g,n}$, with identity element $\mathbf{1} + O_{g,n}(V)$. In addition, $\omega + O_{g,n}(V)$ lies in the center of $A_{g,n}(V)$.

    \item The identity map on $V$ induces a surjective homomorphism of associative algebras from $A_{g,n}(V)$ to $A_{g,n - 1/T}(V)$.

    \item The space $\Omega_n(M)$ is an $A_{g,n}(V)$-module, where the action of $u + O_{g,n}(V)$ is given by $o(u)$ for any $u \in V$.

    \item The vertex operator algebra $V$ is $g$-rational if and only if all $A_{g,n}(V)$ are finite-dimensional semisimple associative algebras.

    \item If $V$ is $g$-rational, then $V$ has only finitely many irreducible admissible $g$-twisted modules up to isomorphism, and every irreducible admissible $g$-twisted $V$-module is ordinary.
\end{enumerate}
\end{thm}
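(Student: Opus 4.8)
The plan is to follow the Zhu-algebra paradigm, adapted to the twisted setting and to arbitrary level $n \in (1/T)\N$. The five assertions split naturally into the algebraic statements (1)--(2), the module statement (3), and the representation-theoretic statements (4)--(5). The engine for all of them will be a short list of reduction formulas describing how residues $\Res_z f(z) Y(u,z)v$ behave modulo $O_{g,n}(V)$, together with the $L(-1)$-derivative property $Y(\D u, z) = \frac{d}{dz} Y(u,z)$ and the associativity and commutativity consequences of the twisted Jacobi identity.

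For part (1), I would first record the basic fact that, modulo $O_{g,n}(V)$, one may integrate by parts: since $L(-1)u + L(0)u \in O_{g,n}(V)$ and $Y(\D u, z) = \frac{d}{dz} Y(u,z)$, every expression $\Res_z \frac{d}{dz}\bigl(f(z)(1+z)^{\wt u} Y(u,z)v\bigr)$ vanishes after the weight shift. This yields a family of congruences that let one trade the exponent of $(1+z)$ against the power of $z$ in the defining residues. Using these I would prove in turn that $O_{g,n}(V)$ is stable under $*_{g,n}$ on both sides (the two-sided ideal property), that the commutator $u *_{g,n} v - v *_{g,n} u$ collapses to a single residue expression, and finally that $(u *_{g,n} v)*_{g,n} w \equiv u *_{g,n}(v *_{g,n} w) \pmod{O_{g,n}(V)}$, giving associativity. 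The computation $\one *_{g,n} v \equiv v \equiv v *_{g,n}\one$ supplies the identity, and centrality of $\omega$ follows by applying the commutator formula to $\omega$ together with $L(-1)\omega + L(0)\omega \in O_{g,n}(V)$. Part (2) then reduces to the inclusion $O_{g,n}(V) \subseteq O_{g,n-1/T}(V)$, checked by comparing the exponents $\delta_{\bar n}(r)$ and $\lfloor n\rfloor$ at the two levels, plus the congruence $u *_{g,n} v \equiv u *_{g,n-1/T} v \pmod{O_{g,n-1/T}(V)}$; both are routine given the reduction formulas.

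For part (3) I would use the twisted Jacobi identity to expand $o(u)o(v)$ as a sum $\sum_{i\ge 0} c_i\, o(u_i v)$ of lower products, and show that on $\Omega_n(M)$ — where by definition $v_{\wt v - 1 + i}$ annihilates the space for $i>n$ — the higher modes drop out, so that $o(u)o(v)w = o(u *_{g,n} v)w$ and $o(O_{g,n}(V))\Omega_n(M)=0$. The one point requiring care is that $o(u)$ preserves $\Omega_n(M)$, which I would deduce from the twisted commutator formula bounding the modes that can act nontrivially. Part (4) is where the real work lies and is the main obstacle. The easy direction combines part (3) with the correspondence between irreducible admissible $g$-twisted $V$-modules and irreducible $A_{g,n}(V)$-modules: if the admissible module category is semisimple, each $A_{g,n}(V)$ is forced to be finite-dimensional and semisimple. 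The converse is harder: assuming every $A_{g,n}(V)$ is finite-dimensional semisimple, I would filter each admissible module $M$ by the subspaces $\Omega_n(M)$, use the Verma-type generalized module generated from an $A_{g,m}(V)$-module together with the surjections of part (2), and prove that $M$ decomposes into irreducibles. The technical heart is showing that this generalized Verma construction is exact and that level-by-level semisimplicity propagates to complete reducibility of the whole module — the step that genuinely requires the full tower $\{A_{g,n}(V)\}$ rather than $A_g(V)$ alone. Finally, part (5) follows once (4) is established: finite-dimensionality bounds the number of irreducible $A_{g,n}(V)$-modules and hence of irreducible admissible $g$-twisted modules, while the semisimple central action of $L(0)$ (central by part (1)) upgrades each such module to an ordinary one with finite-dimensional homogeneous subspaces.
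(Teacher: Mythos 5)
This theorem is not proved in the paper at all: it is background material quoted from \cite{DLM3}, so the only ``proof'' the paper offers is the citation, and there is no in-paper argument to compare yours against. Your plan reconstructs essentially the strategy of that reference (and of \cite{DLM1,DLM2} in the untwisted case): reduction formulas modulo $O_{g,n}(V)$ derived from $L(-1)u+L(0)u\in O_{g,n}(V)$ together with the $L(-1)$-derivative property, the ideal/commutator/associativity/identity/centrality computations for (1)--(2), the functor $\Omega_n$ and the mode-expansion lemma $o(u)o(v)w=o(u*_{g,n}v)w$ for (3), Verma-type induction played against the tower of surjections for (4), and Schur's lemma on the central element $\omega+O_{g,n}(V)$ for (5). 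As an outline this is sound, with two remarks: the genuinely hard half of (4) --- that finite-dimensionality and semisimplicity of all $A_{g,n}(V)$ forces complete reducibility of every admissible $g$-twisted module --- is correctly identified but deferred rather than carried out, and that step is where essentially all of the work in \cite{DLM3} lies; and you rightly prove the ideal property in (1) for $*_{g,n}$ (which is what the quotient algebra structure requires), the statement's reference to $\circ_{g,n}$ being a slip in the paper's transcription of the theorem.
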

For a conformal vertex algebra $(V, \omega)$ of central charge $c$, there exists another conformal vertex algebra structure on $V$ defined by the vertex operator map
$$
Y[u, z] = Y\left(e^{z L(0)} u, e^z - 1\right),
$$
for all $u \in V$, with $\mathbf{1}$ as the vacuum vector and $\tilde{\omega} = \omega - \frac{1}{24} c \mathbf{1}$ as the new conformal vector (see \cite{Z1}). We denote this new conformal vertex algebra by $\exp(V, \omega)$ and write
$$
Y[\tilde{\omega}, z] = \sum_{n \in \mathbb{Z}} L[n] z^{-n-2}.
$$

Let $(V, \omega)$ be a conformal vertex algebra, $g$ an automorphism of $V$ of finite order $T$, and let $n \in (1/T)\mathbb{N}$. One can similarly define weak $g$-twisted $V$-modules for $(V, \omega)$. Furthermore, we can define the associative algebra $A_{g,n}(V, \omega)$ associated to the conformal vertex algebra $(V, \omega)$ under the assumption that $V$ has integer conformal weights.

Let $V$ be a conformal vertex algebra. Let $g_1, g_2, g_3$ be mutually commuting automorphisms of $V$ of finite order dividing a positive integer $T$, i.e., $g_k^T = 1$ for $k = 1, 2, 3$. For integers $j_1, j_2 \in \mathbb{Z}$, define
$$
V^{(j_1, j_2)} = \left\{ v \in V \mid g_k v = e^{-2\pi\sqrt{-1} j_k / T} v,\ k = 1, 2 \right\}.
$$
Then we have the decomposition
$
V = \bigoplus_{0 \leq j_1, j_2 < T} V^{(j_1, j_2)}.
$
The following definition is due to Xu (see \cite{X1}; cf. \cite{DL1}, \cite{FHL1}):

\begin{defi}
Let $V$ be a conformal vertex algebra. For $k = 1, 2, 3$, let $(M_k, Y_{M_k})$ be a weak $g_k$-twisted $V$-module. An intertwining operator of type $\binom{M_3}{M_1\ M_2}$ is a linear map
\begin{align*}
\mathcal{Y}(\cdot, x): &\ M_1 \to (\operatorname{Hom}(M_2, M_3))\{x\} \\
&\ w \mapsto \mathcal{Y}(w, x) = \sum_{h \in \mathbb{C}} w_h x^{-h - 1}
\end{align*}
satisfying the following conditions:

\begin{enumerate}
    \item For any $w^{(1)} \in M_1$, $w^{(2)} \in M_2$, and $h \in \mathbb{C}$,
    $$
    w^{(1)}_{h + n} w^{(2)} = 0 \quad \text{for } n \in \mathbb{Q} \text{ sufficiently large}.
    $$

    \item For any $v \in V^{(j_1, j_2)}$ with $j_1, j_2 \in \mathbb{Z}$, and $w^{(1)} \in M_1$, the following Jacobi identity holds on $M_2$:
    \begin{align*}
    & x_0^{-1} \left(\frac{x_1 - x_2}{x_0}\right)^{\frac{j_1}{T}} \delta\left(\frac{x_1 - x_2}{x_0}\right) Y_{M_3}(v, x_1) \mathcal{Y}(w^{(1)}, x_2) \\
    &\quad - e^{\frac{j_1}{T} \pi i} x_0^{-1} \left(\frac{x_2 - x_1}{x_0}\right)^{\frac{j_1}{T}} \delta\left(\frac{x_2 - x_1}{-x_0}\right) \mathcal{Y}(w^{(1)}, x_2) Y_{M_2}(v, x_1) \\
    &= x_2^{-1} \left(\frac{x_1 - x_0}{x_2}\right)^{-\frac{j_2}{T}} \delta\left(\frac{x_1 - x_0}{x_2}\right) \mathcal{Y}(Y_{M_1}(v, x_0) w^{(1)}, x_2).
    \end{align*}

    \item For any $w^{(1)} \in M_1$,
    $$
    \mathcal{Y}(L_{M_1}(-1) w^{(1)}, x) = \frac{d}{dx} \mathcal{Y}(w^{(1)}, x).
    $$
\end{enumerate}
We denote by $I_V\binom{M_3}{M_1\ M_2}$ the space of all such intertwining operators of the given type. The fusion rule is defined as
$$
N_{M_1, M_2}^{M_3} = \dim I_V\binom{M_3}{M_1\ M_2}.
$$
These numbers are commonly referred to as the fusion rules of the modules involved.
\end{defi}
\section{Associative algebras $\tilde{A}_{g, n}(V)  $}

Let $V$ be a vertex algebra, and let $g$ be an automorphism of $V$ of finite order $T$. For any $n \in (1/T)\mathbb{N}$, and for $u \in V^r$, $v \in V$, define the bilinear operations $\diamond_{g,n}$ and $\bullet_{g,n}$ as follows:
\begin{align*}
u \diamond_{g,n} v 
&= \operatorname{Res}_x \frac{e^{x(\delta_{\bar{n}}(r) + \lfloor n \rfloor + r / T)}}{(e^x - 1)^{2\lfloor n \rfloor + \delta_{\bar{n}}(r) + \delta_{\bar{n}}(T - r) + 1}} Y(u, x)v \\
&= \operatorname{Res}_y \frac{(1 + y)^{\delta_{\bar{n}}(r) + \lfloor n \rfloor + r / T - 1}}{y^{2\lfloor n \rfloor + \delta_{\bar{n}}(r) + \delta_{\bar{n}}(T - r) + 1}} Y(u, \log(1 + y))v,
\end{align*}
and
\begin{align*}
u \bullet_{g,n} v 
&= \sum_{m = 0}^{\lfloor n \rfloor} (-1)^m \binom{m + \lfloor n \rfloor}{\lfloor n \rfloor} \operatorname{Res}_x \frac{e^{x(\lfloor n \rfloor + 1)}}{(e^x - 1)^{\lfloor n \rfloor + m + 1}} Y(u, x)v \\
&= \sum_{m = 0}^{\lfloor n \rfloor} (-1)^m \binom{m + \lfloor n \rfloor}{\lfloor n \rfloor} \operatorname{Res}_y \frac{(1 + y)^{\lfloor n \rfloor}}{y^{\lfloor n \rfloor + m + 1}} Y(u, \log(1 + y))v,
\end{align*}
if $r = 0$, and $u \bullet_{g,n} v = 0$ if $r > 0$.
Let $\tilde{O}_{g,n}(V)$ be the subspace of $V$ spanned by all elements of the form $u \diamond_{g,n} v$ and $\mathcal{D} u$ for any $u, v \in V$. Define the quotient space
$
\tilde{A}_{g,n}(V) = V / \tilde{O}_{g,n}(V).
$
We will show in this section that $\tilde{A}_{g,n}(V)$, equipped with the multiplication induced by $\bullet_{g,n}$, has the structure of an associative algebra. Moreover, the image of the vacuum vector, $\mathbf{1} + \tilde{O}_{g,n}(V)$, serves as the identity element in this algebra.

From \cite{DLM1}, we have the following identities for any $s \in \mathbb{N}$:
\begin{align}
&\sum_{m=0}^s \binom{m + s}{s} \frac{(-1)^m (1 + z)^{s+1} - (-1)^s (1 + z)^m}{z^{s + m + 1}} = 1, \label{zuheeq1} \\
&\sum_{m=0}^s \binom{m + s}{s} (-1)^m \frac{m z + s + m + 1}{z^{s + m + 2}} = (-1)^s \binom{2s + 1}{s} \frac{s + 1}{z^{2s + 2}}, \label{zuheeq2} \\
&\sum_{m=0}^s (-1)^m \binom{m + s}{s} \left( \sum_{i=0}^{s - m} \binom{-m - s - 1}{i} (-1)^i \frac{z_2^i (1 + z_2)^m}{z_1^{i + m}} - \frac{1}{z_1^m} \right) = 0. \label{zuheeq3}
\end{align}

\begin{lem}\label{prop1}
For any $u \in V^r$, $v \in V$, and integers $m \geq k \geq0$, we have
$$
\operatorname{Res}_x \frac{e^{x(\delta_{\bar{n}}(r) + \lfloor n \rfloor + r/T + k)}}{(e^x - 1)^{2\lfloor n \rfloor + \delta_{\bar{n}}(r) + \delta_{\bar{n}}(T - r) + 1 + m}} Y(u, x)v \in \tilde{O}_{g,n}(V).
$$
\end{lem}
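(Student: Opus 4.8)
The plan is to fix $u\in V^r$ and prove the stronger assertion for \emph{all} $v\in V$ at once, by a double induction. Write $\alpha=\delta_{\bar{n}}(r)+\lfloor n\rfloor+r/T$ and $\beta=2\lfloor n\rfloor+\delta_{\bar{n}}(r)+\delta_{\bar{n}}(T-r)+1$, and for an integer $k\ge 0$, an integer $m$ with $\beta+m\ge 0$, and any $w\in V$ set
\[
E(k,m;w)=\operatorname{Res}_x \frac{e^{x(\alpha+k)}}{(e^x-1)^{\beta+m}}Y(u,x)w .
\]
Each $E(k,m;w)$ is a well-defined vector of $V$, since only finitely many negative powers of $x$ occur. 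The lemma asserts exactly that $E(k,m;v)\in\tilde{O}_{g,n}(V)$ whenever $m\ge k\ge 0$. Two facts drive the argument: $E(0,0;w)=u\diamond_{g,n}w\in\tilde{O}_{g,n}(V)$ by definition, and $\mathcal{D}z\in\tilde{O}_{g,n}(V)$ for every $z\in V$.

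First I would record two recursions. The algebraic one comes from $e^x=(e^x-1)+1$, which gives
\[
\frac{e^{x(\alpha+k+1)}}{(e^x-1)^{\beta+m}}=\frac{e^{x(\alpha+k)}}{(e^x-1)^{\beta+m-1}}+\frac{e^{x(\alpha+k)}}{(e^x-1)^{\beta+m}},
\]
whence, taking residues against $Y(u,x)w$,
\[
E(k+1,m;w)=E(k,m-1;w)+E(k,m;w),
\]
which I call (R1). The second recursion raises the pole order, and this is where $\mathcal{D}$ enters. Since the residue of a formal $x$-derivative vanishes, $\operatorname{Res}_x \frac{d}{dx}\big[\tfrac{e^{x(\alpha+k)}}{(e^x-1)^{\beta+m}}Y(u,x)w\big]=0$; performing the differentiation, reorganizing the rational factor again by $e^x=(e^x-1)+1$, and using $\frac{d}{dx}Y(u,x)=Y(\mathcal{D}u,x)$ gives
\[
(\beta+m)E(k,m+1;w)=(\alpha+k-\beta-m)E(k,m;w)+\operatorname{Res}_x \frac{e^{x(\alpha+k)}}{(e^x-1)^{\beta+m}}Y(\mathcal{D}u,x)w .
\]
Finally I would rewrite the last term by $[\mathcal{D},Y(u,x)]=Y(\mathcal{D}u,x)$, that is $Y(\mathcal{D}u,x)w=\mathcal{D}(Y(u,x)w)-Y(u,x)\mathcal{D}w$, and pull $\mathcal{D}$ out of the residue, obtaining
\[
(\beta+m)E(k,m+1;w)=(\alpha+k-\beta-m)E(k,m;w)+\mathcal{D}\big[E(k,m;w)\big]-E(k,m;\mathcal{D}w),
\]
which I call (R2). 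As $\beta\ge 1$, the scalar $\beta+m$ is nonzero, so (R2) writes $E(k,m+1;w)$ as a combination of $E(k,m;w)$, $E(k,m;\mathcal{D}w)$ and a vector $\mathcal{D}[\,\cdot\,]\in\tilde{O}_{g,n}(V)$.

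With these in hand I would induct on $m$, the claim at level $m$ being that $E(k,m;w)\in\tilde{O}_{g,n}(V)$ for every $w\in V$ and every $0\le k\le m$. The base case $m=0$ forces $k=0$ and is precisely $E(0,0;w)=u\diamond_{g,n}w\in\tilde{O}_{g,n}(V)$. For the step, assume the claim at level $m$. For $0\le k\le m$, apply (R2): the terms $E(k,m;w)$ and $E(k,m;\mathcal{D}w)$ lie in $\tilde{O}_{g,n}(V)$ by the induction hypothesis (the latter because $\mathcal{D}w\in V$), while $\mathcal{D}[E(k,m;w)]\in\tilde{O}_{g,n}(V)$ automatically, so $E(k,m+1;w)\in\tilde{O}_{g,n}(V)$. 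It remains to treat the boundary index $k=m+1$, which (R2) does not reach; here I apply (R1) with $k=m$ and pole parameter $m+1$, giving $E(m+1,m+1;w)=E(m,m;w)+E(m,m+1;w)$, both summands of which already lie in $\tilde{O}_{g,n}(V)$ (the first by the induction hypothesis, the second from the case $k=m$ just treated). This closes level $m+1$.

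The main obstacle is not any single computation but the bookkeeping that makes the induction close. The term $E(k,m;\mathcal{D}w)$ produced by the integration by parts forces the inductive statement to be quantified over all $w\in V$, not merely the given $v$; and within each level the indices must be handled in the right order—first $0\le k\le m$ via (R2), then the boundary $k=m+1$ via (R1)—to avoid circularity. One should also check at the outset that every $E(k,m;w)$ with $\beta+m\ge 0$ is a genuine finite vector of $V$, so that all residue manipulations are legitimate.
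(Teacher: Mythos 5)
Your proof is correct, and it shares the paper's skeleton---induction on the pole order, driven by integration by parts and the identity $\frac{d}{dx}Y(u,x)=Y(\mathcal{D}u,x)$---but it diverges at the two points where a mechanism must be chosen, so the bookkeeping is genuinely different. In your notation $E(k,m;w)$, $\alpha$, $\beta$: for the exponent shift $k$, the paper reduces to $k=0$ once and for all by a binomial expansion in $y=e^x-1$, writing $E(k,m;v)=\sum_{i=0}^{k}\binom{k}{i}E(0,m-i;v)$ (this is exactly where the hypothesis $m\ge k$ enters), whereas you carry $k$ through the induction via (R1) and must close the boundary case $k=m+1$ separately. For the term produced by integration by parts, the paper keeps the second argument $v$ fixed and observes that $\operatorname{Res}_x\frac{e^{x\alpha}}{(e^x-1)^{\beta+l}}Y(\mathcal{D}u,x)v$ is itself an instance of the induction hypothesis applied to the element $\mathcal{D}u$---legitimate because $\mathcal{D}$ commutes with $g$, hence $\mathcal{D}u\in V^r$---and then solves the resulting linear relation for the higher-pole term $E(0,l+1;v)$; you instead keep $u$ fixed, split $Y(\mathcal{D}u,x)w=\mathcal{D}\bigl(Y(u,x)w\bigr)-Y(u,x)\mathcal{D}w$ by the commutator identity, absorb the first piece using the generator $\mathcal{D}V\subseteq\tilde{O}_{g,n}(V)$, and handle the second with an induction hypothesis strengthened to range over all $w\in V$. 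The trade-offs are mild: your route requires the stronger inductive statement and the extra boundary step, but it never invokes $\mathcal{D}(V^r)\subseteq V^r$, and it makes direct use of the fact that $\mathcal{D}V$ lies in $\tilde{O}_{g,n}(V)$, which the paper's proof of this particular lemma never needs; the paper's route is shorter because the $\mathcal{D}u$-residue is already covered by the induction hypothesis. Both arguments are complete and correct.
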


\begin{proof}
We begin by observing that
\begin{align*}
&\operatorname{Res}_x \frac{e^{x(\delta_{\bar{n}}(r) + \lfloor n \rfloor + r/T + k)}}{(e^x - 1)^{2\lfloor n \rfloor + \delta_{\bar{n}}(r) + \delta_{\bar{n}}(T - r) + 1 + m}} Y(u, x)v \\
=&\operatorname{Res}_y \frac{(1+y)^{\delta_{\bar{n}}(r) + \lfloor n \rfloor + r/T + k - 1}}{y^{2\lfloor n \rfloor + \delta_{\bar{n}}(r) + \delta_{\bar{n}}(T - r) + 1 + m}} Y(u, \log(1+y))v \\
=&\sum_{i=0}^k \binom{k}{i} \operatorname{Res}_y \frac{(1+y)^{\delta_{\bar{n}}(r) + \lfloor n \rfloor + r/T - 1}}{y^{2\lfloor n \rfloor + \delta_{\bar{n}}(r) + \delta_{\bar{n}}(T - r) + 1 + m - i}} Y(u, \log(1+y))v \\
=&\sum_{i=0}^k \binom{k}{i} \operatorname{Res}_x \frac{e^{x(\delta_{\bar{n}}(r) + \lfloor n \rfloor + r/T)}}{(e^x - 1)^{2\lfloor n \rfloor + \delta_{\bar{n}}(r) + \delta_{\bar{n}}(T - r) + 1 + m - i}} Y(u, x)v.
\end{align*}
Hence, it suffices to prove the lemma for the case $k = 0$ and $m \geq 0$. We proceed by induction on $m$.
The base case $m = 0$ is clear from the definition of $\tilde{O}_{g,n}(V)$.
Assume the result holds for all $m \leq l$. Consider $m = l + 1$. Using the inductive hypothesis and computing the residue involving $\mathcal{D}u$, we have:
\begin{align*}
& \operatorname{Res}_x \frac{e^{x(\delta_{\bar{n}}(r) + \lfloor n \rfloor + r/T)}}{(e^x - 1)^{2\lfloor n \rfloor + \delta_{\bar{n}}(r) + \delta_{\bar{n}}(T - r) + 1 + l}} Y(\mathcal{D}u, x)v \\
= & \operatorname{Res}_x \frac{e^{x(\delta_{\bar{n}}(r) + \lfloor n \rfloor + r/T)}}{(e^x - 1)^{2\lfloor n \rfloor + \delta_{\bar{n}}(r) + \delta_{\bar{n}}(T - r) + 1 + l}} \frac{d}{dx} Y(u, x)v \\
= & -\operatorname{Res}_x \left( \frac{d}{dx} \frac{e^{x(\delta_{\bar{n}}(r) + \lfloor n \rfloor + r/T)}}{(e^x - 1)^{2\lfloor n \rfloor + \delta_{\bar{n}}(r) + \delta_{\bar{n}}(T - r) + 1 + l}} \right) Y(u, x)v \\
= & -(\delta_{\bar{n}}(r) + \lfloor n \rfloor + r/T) \operatorname{Res}_x \frac{e^{x(\delta_{\bar{n}}(r) + \lfloor n \rfloor + r/T)}}{(e^x - 1)^{2\lfloor n \rfloor + \delta_{\bar{n}}(r) + \delta_{\bar{n}}(T - r) + 1 + l}} Y(u, x)v \\
& + (2\lfloor n \rfloor + \delta_{\bar{n}}(r) + \delta_{\bar{n}}(T - r) + 1 + l) \operatorname{Res}_x \frac{e^{x(\delta_{\bar{n}}(r) + \lfloor n \rfloor + r/T + 1)}}{(e^x - 1)^{2\lfloor n \rfloor + \delta_{\bar{n}}(r) + \delta_{\bar{n}}(T - r) + l + 2}} Y(u, x)v \\
= & (\lfloor n \rfloor + \delta_{\bar{n}}(T - r) + 1 - r/T + l) \operatorname{Res}_x \frac{e^{x(\delta_{\bar{n}}(r) + \lfloor n \rfloor + r/T)}}{(e^x - 1)^{2\lfloor n \rfloor + \delta_{\bar{n}}(r) + \delta_{\bar{n}}(T - r) + 1 + l}} Y(u, x)v \\
& + (2\lfloor n \rfloor + \delta_{\bar{n}}(r) + \delta_{\bar{n}}(T - r) + 1 + l) \operatorname{Res}_x \frac{e^{x(\delta_{\bar{n}}(r) + \lfloor n \rfloor + r/T)}}{(e^x - 1)^{2\lfloor n \rfloor + \delta_{\bar{n}}(r) + \delta_{\bar{n}}(T - r) + l + 2}} Y(u, x)v.
\end{align*}
By the induction hypothesis, the first term on the right-hand side of the last equality lies in $\tilde{O}_{g,n}(V)$. It follows that the second term on the right-hand side must also belong to $\tilde{O}_{g,n}(V)$, which completes the inductive step and proves the lemma.
\end{proof}

\begin{lem}\label{prop2}
The following statements hold:
\begin{enumerate}
    \item For any $u, v \in V$, we have
    $$
    Y(u, x)v \equiv Y(v, -x)u \mod \tilde{O}_{g,n}(V).
    $$

    \item If $r \neq 0$, then $V^r \subset \tilde{O}_{g,n}(V)$.

    \item For any $u \in V^0$ and $v \in V$, we have
    $$
    u \bullet_{g,n} v \equiv \sum_{m=0}^{\lfloor n \rfloor} (-1)^{\lfloor n \rfloor} \binom{m + \lfloor n \rfloor}{m} \operatorname{Res}_x \frac{e^{xm}}{(e^x - 1)^{\lfloor n \rfloor + m + 1}} Y(v, x)u \mod \tilde{O}_{g,n}(V).
    $$

    \item For any $u, v \in V^0$, the following congruence holds:
    $$
    u \bullet_{g,n} v - v \bullet_{g,n} u \equiv \operatorname{Res}_x Y(u, x)v \mod \tilde{O}_{g,n}(V).
    $$
\end{enumerate}
\end{lem}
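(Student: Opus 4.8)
The plan is to establish the four parts in the listed order, since part (3) uses part (1) and part (4) uses both (1) and (3); throughout I would use that $\D V\subseteq\tilde O_{g,n}(V)$, hence $\D^k w\in\tilde O_{g,n}(V)$ for every $w\in V$ and every $k\geq 1$. For (1), I would apply the skew-symmetry \eqref{eq2.1}, namely $Y(u,x)v=e^{x\D}Y(v,-x)u$. Expanding $e^{x\D}=\sum_{k\geq0}\tfrac{x^k}{k!}\D^k$ and discarding the terms with $k\geq1$, whose coefficients all lie in $\tilde O_{g,n}(V)$, gives $Y(u,x)v\equiv Y(v,-x)u$ coefficientwise modulo $\tilde O_{g,n}(V)$, which is exactly the claim.

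For (2), I would pair $u$ with the vacuum. By the creation property $Y(u,\log(1+y))\one=e^{\D\log(1+y)}u=\sum_{k\geq0}\tfrac{(\log(1+y))^k}{k!}\D^k u$, so in the $y$-form of $\diamond_{g,n}$ only the $k=0$ term survives modulo $\tilde O_{g,n}(V)$:
\[
u\diamond_{g,n}\one\equiv\Big(\Res_y\frac{(1+y)^{A}}{y^{B}}\Big)u=\binom{A}{B-1}u\pmod{\tilde O_{g,n}(V)},
\]
with $A=\delta_{\bar n}(r)+\lfloor n\rfloor+r/T-1$ and $B=2\lfloor n\rfloor+\delta_{\bar n}(r)+\delta_{\bar n}(T-r)+1$. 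Since $u\diamond_{g,n}\one\in\tilde O_{g,n}(V)$ by definition, it remains only to check $\binom{A}{B-1}\neq0$: for $r\neq0$ one has $r/T\in(0,1)$, so $A\notin\Z$ and none of the factors $A,A-1,\dots,A-(B-2)$ can vanish. Therefore $u\in\tilde O_{g,n}(V)$, giving $V^r\subset\tilde O_{g,n}(V)$.

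For (3), I would substitute $Y(u,x)v\equiv Y(v,-x)u$ from part (1) into the definition of $u\bullet_{g,n}v$, then change variables $x\mapsto-x$ (so that $\Res_x F(x)=-\Res_x F(-x)$) and use the identity $(e^{-x}-1)^{b}=(-1)^{b}e^{-bx}(e^x-1)^{b}$ with $b=\lfloor n\rfloor+m+1$ to convert the numerator $e^{-x(\lfloor n\rfloor+1)}$ into $e^{xm}$. Collecting signs turns $(-1)^m$ into $(-1)^{\lfloor n\rfloor}$ and rewrites $\binom{m+\lfloor n\rfloor}{\lfloor n\rfloor}$ as $\binom{m+\lfloor n\rfloor}{m}$, yielding exactly the stated congruence.

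For (4), I would feed (3) into the difference and pass to the $y$-variable via $\Res_x f(x)=\Res_y f(\log(1+y))\tfrac{1}{1+y}$. Writing $Z=Y(v,\log(1+y))u$, the difference becomes
\[
u\bullet_{g,n}v-v\bullet_{g,n}u\equiv\sum_{m=0}^{\lfloor n\rfloor}\binom{m+\lfloor n\rfloor}{\lfloor n\rfloor}\Res_y\frac{(-1)^{\lfloor n\rfloor}(1+y)^{m-1}-(-1)^{m}(1+y)^{\lfloor n\rfloor}}{y^{\lfloor n\rfloor+m+1}}Z.
\]
The numerator is $-\tfrac{1}{1+y}$ times the numerator occurring in \eqref{zuheeq1} (with $s=\lfloor n\rfloor$, $z=y$), so that identity collapses the entire sum and leaves $-\Res_y\tfrac{1}{1+y}Z=-\Res_x Y(v,x)u$ after returning to $x$; a final use of (1), namely $\Res_x Y(u,x)v\equiv\Res_x Y(v,-x)u=-\Res_x Y(v,x)u$, identifies this with $\Res_x Y(u,x)v$. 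The main obstacle is precisely part (4): one must recognize that the combination produced by (3) is $-\tfrac1{1+y}$ times the combination appearing in the known identity \eqref{zuheeq1}, and then correctly compose the two changes of variable — the formal residue substitution between $x$ and $y$ and the skew-symmetry $x\mapsto-x$ — so as to land on $\Res_x Y(u,x)v$ rather than $\pm\Res_x Y(v,x)u$; the sign bookkeeping in (3) is the other place demanding care.
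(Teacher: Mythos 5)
Your proposal is correct and follows essentially the same route as the paper's proof: skew-symmetry \eqref{eq2.1} together with $\mathcal{D}V\subseteq\tilde{O}_{g,n}(V)$ for (1), pairing with the vacuum and the nonvanishing of a binomial coefficient (nonintegrality of $r/T$) for (2), the substitution $x\mapsto-x$ with the identity $e^{-x}-1=-e^{-x}(e^x-1)$ for (3), and the combinatorial identity \eqref{zuheeq1} for (4). The only cosmetic difference is in (4): you collapse both terms onto $Y(v,x)u$, which forces you to recognize the extra factor $-\tfrac{1}{1+y}$ and to invoke (1) once more at the end, whereas the paper applies (3) to $v\bullet_{g,n}u$ so that everything collapses directly onto $Y(u,x)v$ and \eqref{zuheeq1} finishes the argument with no final conversion.
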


\begin{proof}

(1) By definition, $\mathcal{D}u \in \tilde{O}_{g,n}(V)$. Using this together with \eqref{eq2.1}, we obtain
$$
Y(u, x)v = e^{x \mathcal{D}} Y(v, -x)u \equiv Y(v, -x)u \mod \tilde{O}_{g,n}(V),
$$
as required.

(2) Let $u \in V^r$ with $r \ne 0$. Then,
\begin{align*}
u \diamond_{g,n} \mathbf{1} 
&= \operatorname{Res}_y \frac{(1+y)^{\delta_{\bar{n}}(r) + \lfloor n \rfloor + r/T - 1}}{y^{2\lfloor n \rfloor + \delta_{\bar{n}}(r) + \delta_{\bar{n}}(T - r) + 1}} Y(u, \log(1+y))\mathbf{1} \\
&\equiv \operatorname{Res}_y \frac{(1+y)^{\delta_{\bar{n}}(r) + \lfloor n \rfloor + r/T - 1}}{y^{2\lfloor n \rfloor + \delta_{\bar{n}}(r) + \delta_{\bar{n}}(T - r) + 1}} u \mod \tilde{O}_{g,n}(V) \quad\quad\quad\quad\quad \text{(by part (1))} \\
&= \binom{\delta_{\bar{n}}(r) + \lfloor n \rfloor + r/T - 1}{2\lfloor n \rfloor + \delta_{\bar{n}}(r) + \delta_{\bar{n}}(T - r)} u.
\end{align*}
Since $r \ne 0$, the binomial coefficient is nonzero. As $u \diamond_{g,n} \mathbf{1} \in \tilde{O}_{g,n}(V)$, it follows that $u \in \tilde{O}_{g,n}(V)$, completing the proof of part (2).

(3) From part (1), we deduce
\begin{align*}
u \bullet_{g,n} v 
&\equiv \sum_{m=0}^{\lfloor n \rfloor} (-1)^m \binom{m + \lfloor n \rfloor}{m} \operatorname{Res}_x \frac{e^{x(\lfloor n \rfloor + 1)}}{(e^x - 1)^{\lfloor n \rfloor + m + 1}} Y(v, -x)u\bmod \tilde{O}_{g,n}(V) \\
&= \sum_{m=0}^{\lfloor n \rfloor} (-1)^{\lfloor n \rfloor} \binom{m + \lfloor n \rfloor}{m} \operatorname{Res}_x \frac{e^{xm}}{(e^x - 1)^{\lfloor n \rfloor + m + 1}} Y(v, x)u ,
\end{align*}
which proves part (3).

(4) Using part (3), we compute
\begin{align*}
& \quad u \bullet_{g,n} v - v \bullet_{g,n} u \\
&\equiv \operatorname{Res}_x \left( \sum_{m=0}^{\lfloor n \rfloor} \binom{m + \lfloor n \rfloor}{\lfloor n \rfloor} \frac{(-1)^m e^{x(\lfloor n \rfloor + 1)} - (-1)^{\lfloor n \rfloor} e^{xm}}{(e^x - 1)^{\lfloor n \rfloor + m + 1}} \right) Y(u, x)v \mod \tilde{O}_{g,n}(V) \\
&= \operatorname{Res}_y \sum_{m=0}^{\lfloor n \rfloor} \binom{m + \lfloor n \rfloor}{\lfloor n \rfloor} \frac{(-1)^m (1+y)^{\lfloor n \rfloor + 1} - (-1)^{\lfloor n \rfloor} (1+y)^m}{y^{\lfloor n \rfloor + m + 1}} Y\left((1+y)^{-1} u, \log(1+y)\right)v \\
&= \operatorname{Res}_y Y\left((1+y)^{-1} u, \log(1+y)\right)v \quad\quad\quad\quad\quad\quad\quad\quad\quad\quad\quad\quad\quad\quad\quad \text{(by \eqref{zuheeq1})} \\
&= \operatorname{Res}_x Y(u, x)v,
\end{align*}
which completes the proof of part (4).
\end{proof}
\begin{lem}\label{lem3.3}
The subspace $\tilde{O}_{g,n}(V)$ is a two-sided ideal of $V$ under the multiplication $\bullet_{g,n}$.
\end{lem}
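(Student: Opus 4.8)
The plan is to check the ideal condition on the two families of generators of $\tilde{O}_{g,n}(V)$, namely $\mathcal{D}u$ and $u\diamond_{g,n}v$, after reducing everything to the component $V^0$. By Lemma~\ref{prop2}(2) we have $V^r\subseteq\tilde{O}_{g,n}(V)$ for every $r\neq0$, by definition $\bullet_{g,n}$ annihilates any left factor of nonzero $g$-degree, and $w_k$ preserves each $V^r$ when $w\in V^0$ (since $g(w_k x)=(gw)_k(gx)=e^{-2\pi\sqrt{-1}r/T}w_k x$ for $x\in V^r$). Consequently, for a generator $a$ of nonzero $g$-degree we have $a\bullet_{g,n}w=0$, while $w\bullet_{g,n}a$ lands in the relevant $V^{r}\subseteq\tilde{O}_{g,n}(V)$; and left multiplication $w\bullet_{g,n}a$ by $w\in V^r$ with $r\neq0$ vanishes. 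It therefore suffices to show that $a\bullet_{g,n}w\in\tilde{O}_{g,n}(V)$ for all $w\in V$, and $w\bullet_{g,n}a\in\tilde{O}_{g,n}(V)$ for $w\in V^0$, where $a$ is a generator of $g$-degree $0$: either $a=\mathcal{D}u$ with $u\in V^0$, or $a=u\diamond_{g,n}v$ with $u\in V^r$ and $v\in V^{T-r}$.

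I would first settle the generators $\mathcal{D}u$ with $u\in V^0$. For the right action, writing $Y(\mathcal{D}u,x)=\frac{d}{dx}Y(u,x)$ and integrating by parts inside the residue (so that $\operatorname{Res}_x f(x)\frac{d}{dx}Y(u,x)w=-\operatorname{Res}_x f'(x)Y(u,x)w$), differentiating the kernel $e^{x(\lfloor n\rfloor+1)}/(e^x-1)^{\lfloor n\rfloor+m+1}$ and passing to $y=e^x-1$, the sum over $m$ against the weights $(-1)^m\binom{m+\lfloor n\rfloor}{\lfloor n\rfloor}$ collapses, by the combinatorial identity \eqref{zuheeq2} with $s=\lfloor n\rfloor$, to a single residue with a pole of order $2\lfloor n\rfloor+2$. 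This residue is precisely a nonzero scalar multiple of $u\diamond_{g,n}w$, so $(\mathcal{D}u)\bullet_{g,n}w\in\tilde{O}_{g,n}(V)$. For the left action with $w\in V^0$, the commutator formula Lemma~\ref{prop2}(4) gives $w\bullet_{g,n}(\mathcal{D}u)\equiv(\mathcal{D}u)\bullet_{g,n}w+\operatorname{Res}_x Y(w,x)\mathcal{D}u$; the first summand is handled above, and $[\mathcal{D},Y(w,x)]=\frac{d}{dx}Y(w,x)$ yields $\operatorname{Res}_x Y(w,x)\mathcal{D}u=\mathcal{D}\big(\operatorname{Res}_x Y(w,x)u\big)\in\mathcal{D}V\subseteq\tilde{O}_{g,n}(V)$.

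The core of the argument is the generator $a=u\diamond_{g,n}v$ of $g$-degree $0$, where I abbreviate $c_1=\delta_{\bar{n}}(r)+\lfloor n\rfloor+r/T$ and $c_2=2\lfloor n\rfloor+\delta_{\bar{n}}(r)+\delta_{\bar{n}}(T-r)+1$. For the right action I would substitute $Y(u\diamond_{g,n}v,x_1)=\operatorname{Res}_{x_2}\frac{e^{x_2c_1}}{(e^{x_2}-1)^{c_2}}Y(Y(u,x_2)v,x_1)$, and for the left action I would keep the genuine product $Y(w,x_1)Y(u,x_2)$; in either case I would use the Jacobi identity, equivalently the associativity (iterate) formula, to recouple the three vertex operators and to convert the $\delta$-function constraints into binomial expansions of the $(e^{x_i}-1)$-kernels. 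The outcome splits into two kinds of terms: residues whose pole order in the inner variable exceeds $c_2$, which lie in $\tilde{O}_{g,n}(V)$ by Lemma~\ref{prop1}, and a combinatorial remainder which, after summation over $m$ with the weights $(-1)^m\binom{m+\lfloor n\rfloor}{\lfloor n\rfloor}$, matches the alternating two-variable sum of \eqref{zuheeq3} and hence vanishes identically. This is the step I expect to be the main obstacle: one must carry the fractional exponents $r/T$ and the pole-order bookkeeping $\delta_{\bar{n}}(r)+\delta_{\bar{n}}(T-r)$ produced by the twisted grading all the way through the Jacobi recoupling, and then align the surviving index ranges exactly with those of \eqref{zuheeq3} and with the hypothesis $m\geq k\geq0$ of Lemma~\ref{prop1}, so that no term is left unaccounted for. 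Combining this with the $\mathcal{D}u$ case and the reduction to $V^0$ shows that $\tilde{O}_{g,n}(V)$ is a two-sided ideal under $\bullet_{g,n}$.
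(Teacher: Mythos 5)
Your reduction to generators of $g$-degree zero and your handling of the generators $\mathcal{D}u$ are correct and essentially coincide with the paper's Case (1): the right action collapses via \eqref{zuheeq2} to a nonzero scalar multiple of $u\diamond_{g,n}w$ (using $\delta_{\bar{n}}(0)=1$ and $\delta_{\bar{n}}(T)=0$ so that the collapsed residue really is $u\diamond_{g,n}w$), and the left action reduces, via Lemma~\ref{prop2}(4) and the vanishing of the residue of a total derivative, to an element of $\mathcal{D}V$. The genuine gap is in the diamond generators, which you yourself defer as ``the main obstacle'': there you give only a plan, and the plan's predicted combinatorial structure is wrong. You assert that after the Jacobi recoupling the expression splits into terms covered by Lemma~\ref{prop1} plus ``a combinatorial remainder'' that vanishes by \eqref{zuheeq3}. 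In fact \eqref{zuheeq3} plays no role in this lemma; it is the identity needed for the associativity proof (Theorem~\ref{thm3.4}), where a nonzero remainder must be identified with $u_1\bullet_{g,n}(u_2\bullet_{g,n}u_3)$. For the ideal property, once one applies the Jacobi identity and expands $(e^{x_1-x_2}-1)^{-h(r_1)}$ in the two relevant regions, \emph{every} $(m,k)$-term is individually of the form in Lemma~\ref{prop1}; nothing is left over, and the $m$-sum with its weights is never collapsed. You appear to have transplanted the shape of the associativity argument onto the ideal argument.

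Moreover, the mechanism that makes Lemma~\ref{prop1} apply to every term --- the relations $h(r_1)-f(r_1)=f(r_2)$ and $h(r_1)=h(r_2)$, valid precisely because $r_1+r_2\equiv 0\pmod{T}$ (equation \eqref{zuheeq4}) --- never appears in your sketch, and it cannot be replaced by your paraphrase ``pole order exceeds $c_2$'': Lemma~\ref{prop1} also requires the numerator exponent to be of the form $f(r)+k$ with $0\le k\le m$, where $m$ is the excess of the pole order over $h(r)$, and verifying this exponent bookkeeping is exactly the content of the paper's Cases (2) and (3). Since that verification is the entire substance of the lemma and you leave it undone, naming the wrong closing tool, the proof is incomplete. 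A smaller remark: for the left action $w\bullet_{g,n}(u\diamond_{g,n}v)$ the paper does not recouple $Y(w,x_1)Y(u,x_2)v$ directly as you propose; it instead uses Lemma~\ref{prop2}(4) together with the already-established right-action case to reduce to the single residue $\operatorname{Res}_{x}Y(u\diamond_{g,n}v,x)w$, which shortens the computation considerably.
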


\begin{proof}
To prove that $\tilde{O}_{g,n}(V)$ is a two-sided ideal, we need to show that for any $u, v \in V$,  $u \bullet_{g,n} v$ lies in $ \tilde{O}_{g,n}(V)$ whenever either $u$ or $v$ lies in $\tilde{O}_{g,n}(V)$. This follows from checking three  cases.

\textbf{Case (1):} We first show that $(\mathcal{D}u) \bullet_{g,n} v \in \tilde{O}_{g,n}(V)$ and $u \bullet_{g,n} (\mathcal{D}v) \in \tilde{O}_{g,n}(V)$ for all $u, v \in V$.
Without loss of generality, assume $u \in V^0$. Then, we have
\begin{align*}
&\quad(\mathcal{D}u) \bullet_{g,n} v \\
&= \sum_{m=0}^{\lfloor n \rfloor} (-1)^m \binom{m + \lfloor n \rfloor}{m} \operatorname{Res}_x \frac{e^{x(\lfloor n \rfloor + 1)}}{(e^x - 1)^{\lfloor n \rfloor + m + 1}} Y(\mathcal{D}u, x)v \\
&= \sum_{m=0}^{\lfloor n \rfloor} (-1)^m \binom{m + \lfloor n \rfloor}{m} \operatorname{Res}_x \frac{e^{x(\lfloor n \rfloor + 1)}}{(e^x - 1)^{\lfloor n \rfloor + m + 1}} \frac{d}{dx} Y(u, x)v \\
&= \sum_{m=0}^{\lfloor n \rfloor} (-1)^{m+1} \binom{m + \lfloor n \rfloor}{m} \operatorname{Res}_x \frac{e^{x(\lfloor n \rfloor + 1)} \left( (\lfloor n \rfloor + 1)(e^x - 1) - (\lfloor n \rfloor + m + 1)e^x \right)}{(e^x - 1)^{\lfloor n \rfloor + m + 2}} Y(u, x)v \\
&= \sum_{m=0}^{\lfloor n \rfloor} (-1)^{m+1} \binom{m + \lfloor n \rfloor}{m} \operatorname{Res}_y \frac{(my + \lfloor n \rfloor + m + 1)(1 + y)^{\lfloor n \rfloor}}{y^{\lfloor n \rfloor + m + 2}} Y(u, \log(1+y))v \\
&= (-1)^{\lfloor n \rfloor + 1} \binom{2\lfloor n \rfloor + 1}{\lfloor n \rfloor} (\lfloor n \rfloor + 1) \operatorname{Res}_y \frac{(1 + y)^{\lfloor n \rfloor}}{y^{2\lfloor n \rfloor + 2}} Y(u, \log(1+y))v \quad\quad\quad\quad\quad \text{(by \eqref{zuheeq2})} \\
&= (-1)^{\lfloor n \rfloor + 1} \binom{\lfloor n \rfloor + 1}{\lfloor n \rfloor} (\lfloor n \rfloor + 1) u \diamond_{g,n} v \in \tilde{O}_{g,n}(V).
\end{align*}
Next, by Lemma~\ref{prop2}(4), we have
$$
u \bullet_{g,n} (\mathcal{D}v) \equiv -\operatorname{Res}_x \frac{d}{dx} Y(v, x)u = 0 \mod \tilde{O}_{g,n}(V),
$$
which completes Case (1).

\textbf{Case (2):} Next, we show that $(u_1 \diamond_{g,n} u_2) \bullet_{g,n} u_3 \in \tilde{O}_{g,n}(V)$ for all $u_1, u_2, u_3 \in V$.
Assume without loss of generality that $u_1 \in V^{r_1}, u_2 \in V^{r_2}$ with $r_1 + r_2 \equiv 0 \mod T$. Define the functions:
$$
f(r) = \delta_{\bar{n}}(r) + \lfloor n \rfloor + r/T, \quad h(r) = 2\lfloor n \rfloor + \delta_{\bar{n}}(r) + \delta_{\bar{n}}(T - r) + 1.
$$
Note the identities:
\begin{equation}\label{zuheeq4}
   h(r_1) - f(r_1) = f(r_2), \quad h(r_1) = h(r_2). 
\end{equation}
Now compute:
\begin{align*}
    &(u_1\diamond_{g,n} u_2) \bullet_{g,n}u_3\\
    =&\sum_{m=0}^{\xqz{n}}(-1)^m\binom{m+\xqz{n}}{m}\Res_{x_2}\Res_{x_0}
\frac{e^{x_2(\xqz{n}+1)}}{(e^{x_2}-1)^{\xqz{n}+m+1}}\frac{e^{x_0f(r_1)}}{(e^{x_0}-1)^{h(r_1)}}Y(Y(u_1,x_0)u_2,x_2)u_3\\
    =&\sum_{m=0}^{\xqz{n}}(-1)^m\binom{m+\xqz{n}}{m}\Res_{x_1}\Res_{x_2}\Res_{x_0}
\frac{e^{x_2(\xqz{n}+1)}}{(e^{x_2}-1)^{\xqz{n}+m+1}}\frac{e^{x_0f(r_1)}}{(e^{x_0}-1)^{h(r_1)}}\\
    &\quad\cdot x_1^{-1}\delta\left(
    \frac{x_2+x_0}{x_1}\right) Y(Y(u_1,x_0)u_2,x_2)u_3\\
    =&\sum_{m=0}^{\xqz{n}}(-1)^m\binom{m+\xqz{n}}{m}\Res_{x_1}\Res_{x_2}\Res_{x_0}
\frac{e^{x_2(\xqz{n}+1)}}{(e^{x_2}-1)^{\xqz{n}+m+1}}\frac{e^{x_0f(r_1)}}{(e^{x_0}-1)^{h(r_1)}}\\
&\quad\cdot x_0^{-1} \delta\left(\frac{x_1-x_2}{x_0}\right) Y\left(u_1, x_1\right) Y\left( u_2, x_2\right) u_3\\
&-\sum_{m=0}^{\xqz{n}}(-1)^m\binom{m+\xqz{n}}{m}\Res_{x_1}\Res_{x_2}\Res_{x_0}
\frac{e^{x_2(\xqz{n}+1)}}{(e^{x_2}-1)^{\xqz{n}+m+1}}\frac{e^{x_0f(r_1)}}{(e^{x_0}-1)^{h(r_1)}}\\
&\quad\cdot x_0^{-1} \delta\left(\frac{x_2-x_1}{-x_0}\right) Y\left(u_2, x_2\right) Y\left(u_1, x_1\right) u_3\\
=&\sum_{m=0}^{\xqz{n}}(-1)^m\binom{m+\xqz{n}}{m}\Res_{x_1}\Res_{x_2}
\frac{e^{x_2(\xqz{n}+1)}}{(e^{x_2}-1)^{\xqz{n}+m+1}}\frac{e^{(x_1-x_2)f(r_1)}}{(e^{(x_1-x_2)}-1)^{h(r_1)}}\\
&\quad\cdot  Y\left( u_1, x_1\right) Y\left(u_2, x_2\right) u_3\\
&-\sum_{m=0}^{\xqz{n}}(-1)^m\binom{m+\xqz{n}}{m}\Res_{x_1}\Res_{x_2}
\frac{e^{x_2(\xqz{n}+1)}}{(e^{x_2}-1)^{\xqz{n}+m+1}}\frac{e^{(-x_2+x_1)f(r_1)}}{(e^{(-x_2+x_1)}-1)^{h(r_1)}}\\
&\quad\cdot  Y\left(u_2, x_2\right) Y\left( u_1, x_1\right) u_3\\
=&\sum_{m=0}^{\xqz{n}}(-1)^m\binom{m+\xqz{n}}{m}\Res_{x_1}\Res_{x_2}
\frac{e^{x_2(\xqz{n}+1)}}{(e^{x_2}-1)^{\xqz{n}+m+1}}\frac{e^{(x_1-x_2)f(r_1)}}{(e^{(x_1-x_2)}-e^{-x_2}+e^{-x_2}-1)^{h(r_1)}}\\
&\quad\cdot  Y\left( u_1, x_1\right) Y\left(u_2, x_2\right) u_3\\
&-\sum_{m=0}^{\xqz{n}}(-1)^m\binom{m+\xqz{n}}{m}\Res_{x_1}\Res_{x_2}
\frac{e^{x_2(\xqz{n}+1)}}{(e^{x_2}-1)^{\xqz{n}+m+1}}\frac{e^{(-x_2+x_1)f(r_1)}}{(e^{(-x_2+x_1)}-e^{x_1}+e^{x_1}-1)^{h(r_1)}} \\
&\quad\cdot Y\left(u_2, x_2\right) Y\left( u_1, x_1\right) u_3\\
=&\sum_{m=0}^{\xqz{n}}\sum_{k\geq0}(-1)^{m+k}\binom{m+\xqz{n}}{m}\binom{-h(r_1)}{k}\Res_{x_1}\Res_{x_2}\frac{e^{x_1f(r_1)}}{(e^{x_1}-1)^{h(r_1)+k}}\\
&\quad\cdot\frac{e^{x_2(\xqz{n}+1+h(r_1)-f(r_1))}}{(e^{x_2}-1)^{\xqz{n}+m+1-k}} Y\left(u_1, x_1\right) Y\left( u_2, x_2\right) u_3\\
&-\sum_{m=0}^{\xqz{n}}\sum_{k\geq0}(-1)^{m+h(r_1)+k}\binom{m+\xqz{n}}{m}\binom{-h(r_1)}{k}\Res_{x_1}\Res_{x_2}
\frac{e^{x_2(f(r_2)+\xqz{n}+k+1)}}{(e^{x_2}-1)^{h(r_2)+\xqz{n}+m+k+1}}\\
&\quad\cdot e^{x_1(f(r_1)-h(r_1)-k)}(e^{x_1}-1)^k
Y\left(u_2, x_2\right) Y\left( u_1, x_1\right) u_3\quad\quad\quad\quad\quad\quad\quad\quad\quad\quad\text{(by  \eqref{zuheeq4})}\\
\equiv&\,0\bmod \tilde{O}_{g,n}(V), \quad\quad\quad\quad\quad\quad\quad\quad\quad\quad\quad\quad\quad\quad\quad\quad\quad\quad\quad\quad\quad\quad\quad\text{(by Lemma~\ref{prop1})}
\end{align*}
which completes Case (2).

\textbf{Case (3):} Finally, we show that $u_3 \bullet_{g,n} (u_1 \diamond_{g,n} u_2) \in \tilde{O}_{g,n}(V)$ for all $u_1, u_2, u_3 \in V$.
Assuming $u_3 \in V^0$ and $u_1 \in V^{r_1}, u_2 \in V^{r_2}$ with $r_1 + r_2 \equiv 0 \mod T$ from the definition $\bullet_{g,n}$ and Lemma~\ref{prop2}(2). Then we have 
\begin{align*}
    &-u_3 \bullet_{g,n}(u_1\diamond_{g,n} u_2)\\
    \equiv& \operatorname{Res}_{x_2}\operatorname{Res}_{x_0}\frac{e^{x_0f(r_1)}}{(e^{x_0}-1)^{h(r_1)}} Y\left( Y(u_1,x_0)u_2, x_2\right) u_3  \bmod \tilde{O}_{g, n}(V)\quad\quad\quad\quad\text{(by Lemma~ \ref{prop2}(4))} \\
    = &\operatorname{Res}_{x_2}\operatorname{Res}_{x_0}\operatorname{Res}_{x_1}\frac{e^{x_0f(r_1)}}{(e^{x_0}-1)^{h(r_1)}} x_1^{-1}\delta\left(\frac{x_2+x_0}{x_1}\right)Y\left( Y(u_1,x_0)u_2, x_2\right) u_3\\
    =&\operatorname{Res}_{x_2}\operatorname{Res}_{x_0}\operatorname{Res}_{x_1}\frac{e^{x_0f(r_1)}}{(e^{x_0}-1)^{h(r_1)}} x_0^{-1}\delta\left(\frac{x_1-x_2}{x_0}\right)Y(u_1,x_1)Y\left( u_2, x_2\right) u_3\\
    &\quad-\operatorname{Res}_{x_2}\operatorname{Res}_{x_0}\operatorname{Res}_{x_1}\frac{e^{x_0f(r_1)}}{(e^{x_0}-1)^{h(r_1)}} x_0^{-1}\delta\left(\frac{-x_2+x_1}{x_0}\right)Y\left( u_2, x_2\right) Y(u_1,x_1)u_3\\
    =& \operatorname{Res}_{x_2}\operatorname{Res}_{x_1}\frac{e^{(x_1-x_2)f(r_1)}}{(e^{(x_1-x_2)}-1)^{h(r_1)}} Y\left(u_1, x_1\right) Y\left( u_2, x_2\right) u_3\\
    &\quad-\operatorname{Res}_{x_2}\operatorname{Res}_{x_1}\frac{e^{(x_1-x_2)f(r_1)}}{(e^{(-x_2+x_1)}-1)^{h(r_1)}} Y\left(u_2, x_2\right) Y\left( u_1, x_1\right) u_3\\
    =& \operatorname{Res}_{x_2}\operatorname{Res}_{x_1}\frac{e^{(x_1-x_2)f(r_1)}}{(e^{(x_1-x_2)}-e^{-x_2}+e^{-x_2}-1)^{h(r_1)}} Y\left(u_1, x_1\right) Y\left( u_2, x_2\right) u_3\\
    &\quad-\operatorname{Res}_{x_2}\operatorname{Res}_{x_1}\frac{e^{(x_1-x_2)f(r_1)}}{(e^{(-x_2+x_1)}-e^{x_1}+e^{x_1}-1)^{h(r_1)}} Y\left(u_2, x_2\right) Y\left( u_1, x_1\right) u_3\\
    =&\sum_{k\geq0}(-1)^k\binom{-h(r_1)}{k}\Res_{x_1}\Res_{x_2}\frac{e^{x_1f(r_1)}}{(e^{x_1}-1)^{h(r_1)+k}}e^{x_2(h(r_1)-f(r_1))}\\
    &\cdot(e^{x_2}-1)^k Y\left(u_1, x_1\right) Y\left( u_2, x_2\right) u_3\\
&-\sum_{k\geq0}(-1)^{h(r_1)+k}\binom{-h(r_1)}{k}\Res_{x_1}\Res_{x_2}
\frac{e^{x_2(f(r_2)+k)}}{(e^{x_2}-1)^{h(r_2)+k}}e^{x_1(f(r_1)-h(r_1)-k)}\\
&\cdot(e^{x_1}-1)^k Y\left(u_2, x_2\right) Y\left( u_1, x_1\right) u_3\quad\quad\quad\quad\quad\quad\quad\quad\quad\quad\quad\quad\quad\quad\quad\quad\quad\quad\text{(by \eqref{zuheeq4})}\\
\equiv&\,0\bmod \tilde{O}_{g,n}(V), \quad\quad\quad\quad\quad\quad\quad\quad\quad\quad\quad\quad\quad\quad\quad\quad\quad\quad\quad\quad\quad\quad\quad\text{(by  Lemma~\ref{prop1})}
\end{align*}
which completes Case (3).
Thus, in all cases, $\tilde{O}_{g,n}(V)$ is closed under left and right multiplication by arbitrary elements of $V$, proving that it is a two-sided ideal under $\bullet_{g,n}$.
\end{proof}

\begin{thm}\label{thm3.4}
The product $\bullet_{g,n}$ induces the structure of an associative algebra on the quotient space $\tilde{A}_{g,n}(V) = V / \tilde{O}_{g,n}(V)$, with identity element given by $\mathbf{1} + \tilde{O}_{g,n}(V)$.
\end{thm}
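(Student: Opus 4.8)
The statement to prove asserts three things: that $\bullet_{g,n}$ descends to a well-defined product on $\tilde{A}_{g,n}(V)$, that this product is associative, and that $\mathbf{1}+\tilde{O}_{g,n}(V)$ is a two-sided identity. Well-definedness is immediate from Lemma~\ref{lem3.3}: since $\tilde{O}_{g,n}(V)$ is a two-sided ideal under $\bullet_{g,n}$, the operation passes to the quotient. Thus only the identity axiom and associativity must be checked, both as congruences on $V$ modulo $\tilde{O}_{g,n}(V)$.

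For the identity element I would argue directly. Since $\mathbf{1}\in V^0$ and $Y(\mathbf{1},x)=\operatorname{id}_V$, the relevant scalar residue is $\operatorname{Res}_x e^{x(\lfloor n\rfloor+1)}(e^x-1)^{-\lfloor n\rfloor-m-1}=\operatorname{Res}_y (1+y)^{\lfloor n\rfloor}y^{-\lfloor n\rfloor-m-1}=\binom{\lfloor n\rfloor}{\lfloor n\rfloor+m}$, which vanishes unless $m=0$; hence $\mathbf{1}\bullet_{g,n}v=v$ on the nose. For $u\bullet_{g,n}\mathbf{1}$ with $u\in V^0$, I would use the $v=\mathbf{1}$ case of \eqref{eq2.1}, namely $Y(u,x)\mathbf{1}=e^{x\mathcal{D}}u=\sum_{j\ge0}\tfrac{x^j}{j!}\mathcal{D}^j u$. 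Because $\mathcal{D}^j u\in\tilde{O}_{g,n}(V)$ for every $j\ge1$ and the coefficient function has a pole of finite order, only the $j=0$ term survives modulo the ideal, and the same residue computation gives $u\bullet_{g,n}\mathbf{1}\equiv u$. If instead $u\in V^r$ with $r\ne0$, then $u\in\tilde{O}_{g,n}(V)$ by Lemma~\ref{prop2}(2), so nothing further is needed.

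For associativity I would first reduce the ambient vectors to $V^0$. By Lemma~\ref{prop2}(2) we have $V^r\subset\tilde{O}_{g,n}(V)$ for all $r\ne0$, so the images of $V^0$ already span $\tilde{A}_{g,n}(V)$; since $(ab)c-a(bc)$ is trilinear it therefore suffices to prove $(u\bullet_{g,n}v)\bullet_{g,n}w\equiv u\bullet_{g,n}(v\bullet_{g,n}w)\bmod\tilde{O}_{g,n}(V)$ for $u,v,w\in V^0$. On $V^0$ the factors $\delta_{\bar n}(r)$, $\delta_{\bar n}(T-r)$ and $r/T$ collapse to constants, so both products are built only from $\operatorname{Res}$ of $e^{x(\lfloor n\rfloor+1)}(e^x-1)^{-\lfloor n\rfloor-m-1}Y(\cdot,x)(\cdot)$. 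I would expand $(u\bullet_{g,n}v)\bullet_{g,n}w$ by inserting $x_1^{-1}\delta((x_2+x_0)/x_1)$ and applying the Jacobi identity exactly as in Cases (2) and (3) of Lemma~\ref{lem3.3}, converting it into a combination of $\operatorname{Res}_{x_1}\operatorname{Res}_{x_2}$ of $Y(u,x_1)Y(v,x_2)w$ and the opposite-order term, while computing $u\bullet_{g,n}(v\bullet_{g,n}w)$ directly gives another iterated residue of $Y(u,x_1)Y(v,x_2)w$. Passing to $y_i=e^{x_i}-1$ and comparing, the difference is organized precisely by the binomial identity \eqref{zuheeq3} (with $s=\lfloor n\rfloor$): the terms matching the summand of \eqref{zuheeq3} cancel, and the residual terms have the shape handled by Lemma~\ref{prop1}, hence lie in $\tilde{O}_{g,n}(V)$.

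The main obstacle is this associativity computation. The bookkeeping is delicate: one must carry a double sum over the two $\bullet_{g,n}$-indices simultaneously with the binomial expansion of $(e^{x_0}-1)^{-h(0)}$ produced by the Jacobi identity, and then recognize exactly the combination \eqref{zuheeq3} so that everything not cancelled by it is absorbed by Lemma~\ref{prop1}. The identity and well-definedness parts are short; essentially all the work lies in pushing the Case~(2)/(3) manipulations of Lemma~\ref{lem3.3} one level further and invoking \eqref{zuheeq3}, the one combinatorial identity from \eqref{zuheeq1}--\eqref{zuheeq3} not yet used, at the final step.
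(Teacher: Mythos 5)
Your proposal is correct and follows essentially the same route as the paper's proof: well-definedness via Lemma~\ref{lem3.3}, the direct residue computation for $\mathbf{1}\bullet_{g,n}v$, and associativity by inserting the delta function, applying the Jacobi identity, killing the reversed-order term with Lemma~\ref{prop1}, and collapsing the remaining $k$-sum with \eqref{zuheeq3}. The only cosmetic differences are that you verify $u\bullet_{g,n}\mathbf{1}\equiv u$ via the creation property $Y(u,x)\mathbf{1}=e^{x\mathcal{D}}u$ rather than via Lemma~\ref{prop2}(4), and you reduce all three vectors to $V^0$ where the paper reduces only the first two; both variants are valid.
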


\begin{proof}
    We need to prove that $(u_1\bullet_{g,n}u_2)\bullet_{g,n}u_3\equiv u_1\bullet_{g,n}(u_2\bullet_{g,n}u_3)\bmod \tilde{O}_{g,n}(V)$ for any $u_1,u_2,u_3\in V$. By the definition of $\bullet_{g,n}$, Lemma~\ref{prop2}(2) and Lemma~\ref{lem3.3}, we may assume without loss of generality that $u_1,u_2 \in V^0$. Then we have
\begin{align*}
    &(u_1\bullet_{g,n}u_2)\bullet_{g,n}u_3\\
        =&\sum_{m_1,m_2=0}^{\xqz{n}}(-1)^{m_1+m_2}\binom{m_1+\xqz{n}}{\xqz{n}}\binom{m_2+\xqz{n}}{\xqz{n}}\\
        &\cdot\Res_{x_2}\Res_{x_0}\frac{e^{x_2(1+\xqz{n})}}{(e^{x_2}-1)^{\xqz{n}+m_2+1}}\frac{e^{x_0(1+\xqz{n})}}{(e^{x_0}-1)^{\xqz{n}+m_1+1}} Y(Y(u_1,x_0)u_2,x_2)u_3\\
        =&\sum_{m_1,m_2=0}^{\xqz{n}}(-1)^{m_1+m_2}\binom{m_1+\xqz{n}}{\xqz{n}}\binom{m_2+\xqz{n}}{\xqz{n}}\\
        &\cdot\Res_{x_2}\Res_{x_1}\frac{e^{x_2(1+\xqz{n})}}{(e^{x_2}-1)^{\xqz{n}+m_2+1}}\frac{e^{(x_1-x_2)(1+\xqz{n})}}{(e^{(x_1-x_2)}-1)^{\xqz{n}+m_1+1}}Y(u_1,x_1)Y(u_2,x_2)u_3\\
        &-\sum_{m_1,m_2=0}^{\xqz{n}}(-1)^{m_1+m_2}\binom{m_1+\xqz{n}}{\xqz{n}}\binom{m_2+\xqz{n}}{\xqz{n}}\\
        &\cdot\Res_{x_2}\Res_{x_1}\frac{e^{x_2(1+\xqz{n})}}{(e^{x_2}-1)^{\xqz{n}+m_2+1}}\frac{e^{(x_1-x_2)(1+\xqz{n})}}{(e^{(-x_2+x_1)}-1)^{\xqz{n}+m_1+1}} Y(u_2,x_2)Y(u_1,x_1)u_3\\
        =&\sum_{m_1,m_2=0}^{\xqz{n}}\sum_{k\geq 0}(-1)^{m_1+m_2+k}\binom{m_1+\xqz{n}}{\xqz{n}}\binom{m_2+\xqz{n}}{\xqz{n}}\binom{-\xqz{n}-m_1-1}{k}\\
        &\cdot\Res_{x_2}\Res_{x_1}\frac{e^{x_1(1+\xqz{n})}}{(e^{x_1}-1)^{\xqz{n}+m_1+1+k}}\frac{e^{x_2(1+\xqz{n}+m_1)}}{(e^{x_2}-1)^{\xqz{n}+m_2+1-k}}Y(u_1,x_1)Y(u_2,x_2)u_3\\
&-\sum_{m_1,m_2=0}^{\xqz{n}}\sum_{k\geq 0}(-1)^{m_2+\xqz{n}+k+1}\binom{m_1+\xqz{n}}{\xqz{n}}\binom{m_2+\xqz{n}}{\xqz{n}}\binom{-\xqz{n}-m_1-1}{k}\\
&\cdot\Res_{x_2}\Res_{x_1}\frac{e^{x_2(1+\xqz{n}+m_1+k)}}{(e^{x_2}-1)^{2\xqz{n}+m_2+m_1+k+2}}\frac{e^{x_1(-m_1-k)}}{(e^{x_1}-1)^{-k}}Y(u_2,x_2)Y(u_1,x_1)u_3\\
\equiv&\sum_{m_1,m_2=0}^{\xqz{n}}\sum_{k\geq 0}(-1)^{m_1+m_2+k}\binom{m_1+\xqz{n}}{\xqz{n}}\binom{m_2+\xqz{n}}{\xqz{n}}\binom{-\xqz{n}-m_1-1}{k}\\
        &\cdot\Res_{y_2}\Res_{y_1}\frac{(1+y_1)^{\xqz{n}}}{y_1^{\xqz{n}+m_1+1+k}}\frac{{(1+y_2)}^{\xqz{n}+m_1}}{y_2^{\xqz{n}+m_2+1-k}}\\
        &\cdot Y(u_1,\log(1+y_1))Y(u_2,\log(1+y_2))u_3\bmod \tilde{O}_{g,n}(V)\quad\quad\quad\quad\quad\quad\quad\text{(by Lemma~\ref{prop1})}\\
=&\sum_{m_1,m_2=0}^{\xqz{n}}\sum_{k= 0}^{\xqz{n}-m_1}(-1)^{m_1+m_2+k}\binom{m_1+\xqz{n}}{\xqz{n}}\binom{m_2+\xqz{n}}{\xqz{n}}\binom{-\xqz{n}-m_1-1}{k}\\
        &\cdot\Res_{y_2}\Res_{y_1}\frac{(1+y_1)^{\xqz{n}}}{y_1^{\xqz{n}+m_1+1+k}}\frac{{(1+y_2)}^{\xqz{n}+m_1}}{y_2^{\xqz{n}+m_2+1-k}} Y(u_1,\log(1+y_1))Y(u_2,\log(1+y_2))u_3\\
=&\sum_{m_1,m_2=0}^{\xqz{n}}(-1)^{m_1+m_2}\binom{m_1+\xqz{n}}{\xqz{n}}\binom{m_2+\xqz{n}}{\xqz{n}}\cdot\Res_{y_2}\Res_{y_1}\frac{(1+y_1)^{\xqz{n}}}{y_1^{\xqz{n}+m_1+1}}\frac{{(1+y_2)}^{\xqz{n}+m_1}}{y_2^{\xqz{n}+m_2+1}}\\
        &\quad\cdot Y(u_1,\log(1+y_1))Y(u_2,\log(1+y_2))u_3\quad\quad\quad\quad\quad\quad\quad\quad\quad\quad\quad\quad\quad\quad\text{(by \eqref{zuheeq3})}\\
        =&\,u_1\bullet_{g,n}(u_2\bullet_{g,n}u_3).    \end{align*}
        Next, we verify that $\mathbf{1} + \tilde{O}_{g,n}(V)$ acts as the identity element. For any $v \in V$, we compute:
    \begin{align*}
   \mathbf{1}\bullet_{g,n}v&=\sum_{m=0}^{\xqz{n}}(-1)^m\binom{m+\xqz{n}}{\xqz{n}}\Res_{x}\frac{e^{x(\xqz{n}+1)}}{(e^{x}-1)^{\xqz{n}+m+1}}Y(\mathbf{1},x)v \\
   &=\sum_{m=0}^{\xqz{n}}(-1)^m\binom{m+\xqz{n}}{\xqz{n}}\Res_{y}\frac{(1+y)^{\xqz{n}}}{y^{\xqz{n}+m+1}}Y(\mathbf{1},\log(1+y))v\\
   &=\sum_{m=0}^{\xqz{n}}(-1)^m\binom{m+\xqz{n}}{\xqz{n}}\Res_{y}\frac{(1+y)^{\xqz{n}}}{y^{\xqz{n}+m+1}}v=v.
\end{align*}
On the other hand, by Lemma~\ref{prop2}(4), we have:
$$
v \bullet_{g,n} \mathbf{1} \equiv \mathbf{1} \bullet_{g,n} v - \operatorname{Res}_x Y(\mathbf{1}, x)v = v \mod \tilde{O}_{g,n}(V).
$$
Therefore, $\mathbf{1} + \tilde{O}_{g,n}(V)$ is indeed the identity element in $\tilde{A}_{g,n}(V)$, completing the proof.
\end{proof}

\begin{rmk}
   For vertex operator algebra $V$ and $n \in \mathbb{N}$, the associative algebra $\tilde{A}_{n}(V)$ constructed in \cite{H3} is isomorphic to the associative algebra $\widetilde{A}_{\operatorname{id}_V,n}(V)$ in Theorem \ref{thm3.4} under $(2 \pi \sqrt{-1})^{L(0)}$.
\end{rmk}

\begin{prop}
The identity map on $V$ induces a surjective homomorphism of associative algebras from $\tilde{A}_{g,n}(V)$ to $\tilde{A}_{g,n - 1/T}(V)$.
\end{prop}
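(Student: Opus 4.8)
The plan is to deduce the statement from two facts: first, that the identity map on $V$ carries $\tilde{O}_{g,n}(V)$ into $\tilde{O}_{g,n-1/T}(V)$, so that it descends to a well-defined surjective linear map $\phi\colon \tilde{A}_{g,n}(V)\to \tilde{A}_{g,n-1/T}(V)$; and second, that $\phi$ respects the two products, i.e. $u\bullet_{g,n}v\equiv u\bullet_{g,n-1/T}v \bmod \tilde{O}_{g,n-1/T}(V)$ for all $u,v\in V$. Once both are in hand, the conclusion is immediate.

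For the ideal containment it suffices to treat the generators. The elements $\mathcal{D}u$ lie in $\tilde{O}_{g,n-1/T}(V)$ by definition, so I would focus on $u\diamond_{g,n}v$ with $u\in V^r$. Writing $f_n(r)=\delta_{\bar{n}}(r)+\lfloor n\rfloor+r/T$ and $h_n(r)=2\lfloor n\rfloor+\delta_{\bar{n}}(r)+\delta_{\bar{n}}(T-r)+1$ (as in the proof of Lemma~\ref{lem3.3}), so that $u\diamond_{g,n}v=\Res_x \frac{e^{xf_n(r)}}{(e^x-1)^{h_n(r)}}Y(u,x)v$, I would compare levels $n$ and $n-1/T$ via $k=f_n(r)-f_{n-1/T}(r)$ and $M=h_n(r)-h_{n-1/T}(r)$. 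There are two cases. If $\bar{n}\geq 1$ then $\lfloor n-1/T\rfloor=\lfloor n\rfloor$ and $\overline{n-1/T}=\bar{n}-1$, and a short computation with the step functions gives $k=\delta_{\bar{n},r}$ and $M=\delta_{\bar{n},r}+\delta_{\bar{n},T-r}$ (Kronecker deltas, with $\delta_{\bar{n},T-r}=0$ when $r=0$); if $\bar{n}=0$ then $\lfloor n-1/T\rfloor=\lfloor n\rfloor-1$ and $\overline{n-1/T}=T-1$, giving $k=\delta_{r,0}$ and $M=2\delta_{r,0}$. In either case $M\geq k\geq 0$, so Lemma~\ref{prop1}, applied at level $n-1/T$, shows $u\diamond_{g,n}v=\Res_x \frac{e^{x(f_{n-1/T}(r)+k)}}{(e^x-1)^{h_{n-1/T}(r)+M}}Y(u,x)v\in\tilde{O}_{g,n-1/T}(V)$. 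This establishes $\tilde{O}_{g,n}(V)\subseteq \tilde{O}_{g,n-1/T}(V)$, whence $\phi$ is well defined; surjectivity is automatic, since the quotient map $V\twoheadrightarrow \tilde{A}_{g,n-1/T}(V)$ factors through $\phi$.

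For product compatibility I would first dispose of the twisted part. If $u\in V^r$ with $r>0$, then $u\bullet_{g,n}v=u\bullet_{g,n-1/T}v=0$ and moreover $u\in\tilde{O}_{g,n-1/T}(V)$ by Lemma~\ref{prop2}(2), so both sides are congruent and it remains to treat $u\in V^0$. Since $\bullet_{g,n}$ depends on $n$ only through $\lfloor n\rfloor$, when $\bar{n}\geq 1$ we have $\lfloor n-1/T\rfloor=\lfloor n\rfloor$ and the two products are literally equal, so nothing is to prove. The remaining case is $\bar{n}=0$, where $\lfloor n-1/T\rfloor=\lfloor n\rfloor-1$ and one must show that
$$\sum_{m=0}^{\lfloor n\rfloor}(-1)^m\binom{m+\lfloor n\rfloor}{\lfloor n\rfloor}\Res_y\frac{(1+y)^{\lfloor n\rfloor}}{y^{\lfloor n\rfloor+m+1}}Y(u,\log(1+y))v$$
and the corresponding sum with $\lfloor n\rfloor$ replaced by $\lfloor n\rfloor-1$ differ by an element of $\tilde{O}_{g,n-1/T}(V)$. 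Here the relevant instance of Lemma~\ref{prop1} at level $n-1/T$ has $f_{n-1/T}(0)=\lfloor n\rfloor$ and $h_{n-1/T}(0)=2\lfloor n\rfloor$, so every residue $\Res_y \frac{(1+y)^{\lfloor n\rfloor+k-1}}{y^{2\lfloor n\rfloor+M}}Y(u,\log(1+y))v$ with $M\geq k\geq 0$ is killed. I would then match the two sums by a binomial identity in the spirit of \eqref{zuheeq3}, so that the low-order terms cancel identically and the surviving high-order residues lie in $\tilde{O}_{g,n-1/T}(V)$; this step coincides exactly with the untwisted computation, the twist having been removed by restricting to $V^0$.

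I expect the $\bar{n}=0$ matching of the two product formulas to be the main obstacle, since it is the only place where the floor $\lfloor n\rfloor$ actually changes and a genuine combinatorial identity, rather than a direct appeal to Lemma~\ref{prop1}, is needed. By contrast, the ideal containment is pure bookkeeping with the functions $\delta_{\bar{n}}(\cdot)$, and the $\bar{n}\geq 1$ cases of both parts are either immediate or trivially equal.
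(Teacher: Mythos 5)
Your overall strategy is exactly the paper's: establish $\tilde{O}_{g,n}(V)\subseteq\tilde{O}_{g,n-1/T}(V)$ so that the identity map descends to a surjective linear map, then check $u\bullet_{g,n}v\equiv u\bullet_{g,n-1/T}v \bmod \tilde{O}_{g,n-1/T}(V)$ after reducing to $u\in V^0$ and $\bar n=0$ (the case $\bar n\geq 1$ being trivial since $\bullet$ depends on $n$ only through $\lfloor n\rfloor$). Your bookkeeping for the containment --- computing $k=f_n(r)-f_{n-1/T}(r)$ and $M=h_n(r)-h_{n-1/T}(r)$ in the two cases and checking $M\geq k\geq 0$ so that Lemma~\ref{prop1} applies at level $n-1/T$ --- is correct, and in fact more explicit than the paper, which dispatches this step with a bare citation of Lemma~\ref{prop1}.

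The one genuine gap is the step you yourself flag as the main obstacle: the $\bar n=0$ matching of the two product formulas is asserted (``a binomial identity in the spirit of \eqref{zuheeq3}'') but never exhibited, and \eqref{zuheeq3} is the wrong template --- that identity drives the associativity proof, not this reduction. What is actually needed is more elementary. Writing $N=\lfloor n\rfloor$, split the numerator in each summand of $u\bullet_{g,n}v$ as $e^{x(N+1)}=e^{xN}(e^x-1)+e^{xN}$, so that $u\bullet_{g,n}v=\sum_{m=0}^{N}(-1)^m\binom{m+N}{m}\Res_x\bigl(\frac{e^{xN}}{(e^x-1)^{N+m}}+\frac{e^{xN}}{(e^x-1)^{N+m+1}}\bigr)Y(u,x)v$. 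Since $f_{n-1/T}(0)=N$ and $h_{n-1/T}(0)=2N$ (as you computed), Lemma~\ref{prop1} at level $n-1/T$ kills precisely the terms whose denominator exponent is at least $2N$: the $m=N$ term of the first piece and the $m=N-1,N$ terms of the second. Re-indexing the surviving part of the second piece by $m\mapsto m-1$ and applying Pascal's rule $\binom{m+N}{m}-\binom{m+N-1}{m-1}=\binom{m+N-1}{m}$ gives $\sum_{m=0}^{N-1}(-1)^m\binom{m+N-1}{m}\Res_x\frac{e^{xN}}{(e^x-1)^{N+m}}Y(u,x)v$, which is exactly $u\bullet_{g,n-1/T}v$ (recall $\lfloor n-1/T\rfloor=N-1$). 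So the low-order terms do not merely cancel up to elements of $\tilde{O}_{g,n-1/T}(V)$: they reproduce the lower-level product on the nose. With this computation inserted, your proof is complete and coincides with the paper's.
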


\begin{proof}
By Lemma~\ref{prop1}, we know that $\tilde{O}_{g,n}(V) \subset \tilde{O}_{g,n - 1/T}(V)$. Hence, the identity map on $V$ descends to a well-defined linear map between the quotient spaces. We only need to show that $u \bullet_{g,n} v \equiv u \bullet_{g,n-1/T}v \bmod \tilde{O}_{g,n-1/T}(V)$ for any $u, v \in V$. By the definition of $\bullet_{g,n}$ and $\bullet_{g,n-1/T}$, we may assume without loss of generality that $u \in V^0$, $n\in\Z$ and $\xqz{n-1/T}=n-1$. Then we have
\begin{align*}
& \quad u \bullet_{g,n} v\\
& =\sum_{m=0}^n(-1)^m\binom{m+n}{m} \operatorname{Res}_x \frac{e^{(n+1) x}}{\left(e^x-1\right)^{m+n+1}} Y(u, x) v \\
&=\sum_{m=0}^n(-1)^m\binom{m+n}{m} \operatorname{Res}_x \frac{e^{n x}}{\left(e^x-1\right)^{m+n}} Y(u, x) v \\
&\quad+\sum_{m=0}^n(-1)^m\binom{m+n}{m} \operatorname{Res}_x \frac{e^{n x}}{\left(e^x-1\right)^{m+n+1}} Y(u, x) v \\
&\equiv\sum_{m=0}^{n-1}(-1)^m\binom{m+n}{m} \operatorname{Res}_x \frac{e^{n x}}{\left(e^x-1\right)^{m+n}} Y(u, x) v \\
&\quad+\sum_{m=0}^{n-2}(-1)^m\binom{m+n}{m} \operatorname{Res}_x \frac{e^{n x}}{\left(e^x-1\right)^{m+n+1}} Y(u, x) v\bmod \tilde{O}_{g,n-1/T}(V) \\
&=\operatorname{Res}_x \frac{e^{n x}}{\left(e^x-1\right)^{n}} Y(u, x) v+\sum_{m=1}^{n-1}\operatorname{Res}_x \frac{e^{n x}}{\left(e^x-1\right)^{m+n}} Y(u, x) v\\
&\quad\cdot \left(
(-1)^m\binom{m+n}{m}+(-1)^{m+1}\binom{m+n-1}{m-1} 
\right)\\
&=u\bullet_{g,n-1/T}v.
\end{align*}
Thus, the identity map on $V$ induces a well-defined, surjective homomorphism of associative algebras from $\tilde{A}_{g,n}(V)$ to $\tilde{A}_{g,n - 1/T}(V)$, as claimed.
\end{proof}

\section{$g$-Rationality, $g$-regularity and twisted fusion rules}
In this section, we demonstrate that for any vertex operator algebra, the properties of $g$-rationality, $g$-regularity, and twisted fusion rules are independent of the choice of conformal vector.
\begin{lem}\label{lem4.1}
Let $(V, \omega)$ be a conformal vertex algebra with integer conformal weights, let $g$ be an automorphism of $V$ of order $T$, and let $n \in (1/T) \mathbb{N}$. Then we have
$$
A_{g,n}(V, \omega) = \tilde{A}_{g,n}(\exp(V, \omega)).
$$
\end{lem}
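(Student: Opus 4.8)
The goal is to identify the associative algebra $A_{g,n}(V,\omega)$ built from the conformal vertex algebra $(V,\omega)$ with the algebra $\tilde A_{g,n}(\exp(V,\omega))$ built from the square-bracket vertex operator. Both are quotients of the same underlying space $V$, so the plan is to match the defining data of the two quotients: the ideals and the products. I would first recall that the square-bracket vertex operator is $Y[u,z]=Y(e^{zL(0)}u,e^z-1)$, so that under the substitution $e^z-1=w$, i.e. $z=\log(1+w)$, the operator $Y[u,\log(1+y)]$ expands $Y$ evaluated at the point $y$ after twisting by $e^{(\log(1+y))L(0)}=(1+y)^{L(0)}$. The key observation is that for a homogeneous $u$ of weight $\wt u$, the factor $(1+y)^{L(0)}$ contributes exactly $(1+y)^{\wt u}$, which is precisely the power of $(1+y)$ appearing in the $\circ_{g,n}$ and $*_{g,n}$ products defining $A_{g,n}(V,\omega)$.

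The main steps proceed as follows. First I would verify that $\tilde O_{g,n}(\exp(V,\omega))=O_{g,n}(V,\omega)$. Writing $\diamond_{g,n}$ for $\exp(V,\omega)$ in the $y$-variable form and inserting $Y[u,\log(1+y)]=Y((1+y)^{L(0)}u,y)$ for homogeneous $u\in V^r$ of weight $\wt u$, the factor $(1+y)^{\wt u}$ combines with the existing $(1+y)^{\delta_{\bar n}(r)+\lfloor n\rfloor+r/T-1}$ to reproduce exactly the exponent $\wt u-1+\delta_{\bar n}(r)+\lfloor n\rfloor+r/T$ occurring in $u\circ_{g,n}v$; the denominator $y^{2\lfloor n\rfloor+\delta_{\bar n}(r)+\delta_{\bar n}(T-r)+1}$ is identical in both. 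Thus the spanning elements $u\diamond_{g,n}v$ for $\exp(V,\omega)$ coincide with the $u\circ_{g,n}v$ for $(V,\omega)$. It remains to match the other generators: the generator $\mathcal D u$ of $\tilde O_{g,n}$ must be shown to span the same subspace modulo the diamond relations as the generator $L(-1)u+L(0)u$ of $O_{g,n}$. Here I would use that the conformal vector of $\exp(V,\omega)$ is $\tilde\omega=\omega-\tfrac{c}{24}\mathbf 1$ and that the Virasoro operator for the square-bracket structure satisfies $L[-1]=L(-1)+L(0)=\mathcal D+L(0)$ (a standard identity, see \cite{Z1}), so that $L[-1]u=\mathcal D u+L(0)u$; since $\mathcal D=L(-1)$ is the translation operator $\D$ of the underlying vertex algebra $V$ regardless of conformal structure, the generator $\mathcal D u$ of the $\exp$-picture matches $L(-1)u+L(0)u$ of the $(V,\omega)$-picture after accounting for weight gradings.

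Second, I would check that the products agree: $u*_{g,n}v$ in $(V,\omega)$ equals $u\bullet_{g,n}v$ in $\exp(V,\omega)$. This is the same computation as for the ideals but applied to the $*_{g,n}$ and $\bullet_{g,n}$ formulas, where again inserting $Y[u,\log(1+y)]=Y((1+y)^{L(0)}u,y)$ turns the exponent $\lfloor n\rfloor$ of $\bullet_{g,n}$ into $\wt u+\lfloor n\rfloor$, matching $*_{g,n}$; the binomial coefficients and the denominators coincide termwise. Once both the ideals and the products are shown to agree on homogeneous generators, linearity gives $A_{g,n}(V,\omega)=\tilde A_{g,n}(\exp(V,\omega))$ as associative algebras.

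The step I expect to be the main obstacle is the careful bookkeeping of the $L(0)$-twist, in particular confirming that $\mathcal D$ (defined purely from the vertex algebra structure as $v\mapsto v_{-2}\mathbf 1$) is simultaneously $L(-1)$ for $(V,\omega)$ and the translation operator whose relation to $L[-1]$ produces the correct generator of $\tilde O_{g,n}$. Concretely, I must ensure that the subspace spanned by $\{\mathcal D u\}$ together with the diamond relations equals the subspace spanned by $\{L(-1)u+L(0)u\}$ together with the circle relations; this requires verifying that $L(-1)u+L(0)u$ is expressible via $\mathcal D$ and the $L(0)$-grading in a way consistent across the change of variables, and that no extra relations are inadvertently introduced or lost. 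The residue substitution $\Res_x\,\frac{e^{xa}}{(e^x-1)^b}F(x)=\Res_y\,\frac{(1+y)^{a-1}}{y^b}F(\log(1+y))$, already used throughout Section 3, is the computational engine, so the remaining work is essentially to confirm the exponent matching is exact and to translate $L[-1]$ in terms of $\mathcal D$ and $L(0)$.
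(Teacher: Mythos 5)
Your proposal is correct and follows essentially the same route as the paper: rewrite $Y[u,\log(1+y)]=Y((1+y)^{L(0)}u,y)$ so the $(1+y)^{\wt u}$ factor converts $\diamond_{g,n}$, $\bullet_{g,n}$ into $\circ_{g,n}$, $*_{g,n}$ termwise, and invoke Zhu's identity $L[-1]=L(-1)+L(0)$ to match the translation-operator generator of $\tilde{O}_{g,n}(\exp(V,\omega))$ with the generator $L(-1)u+L(0)u$ of $O_{g,n}(V,\omega)$. The only difference is cosmetic (you work in the $y$-variable where the paper works in the $z$-variable), and the "main obstacle" you flag is exactly the two ingredients above, so nothing is missing.
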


\begin{proof}
Note that $g$ is also an automorphism of $\exp(V, \omega)$.
For any $u \in V^0$ and $v \in V$, we compute:
\begin{align*}
u *_{g,n} v 
&= \sum_{m=0}^{\lfloor n \rfloor} (-1)^m \binom{\lfloor n \rfloor + m}{\lfloor n \rfloor} 
\operatorname{Res}_x \frac{(1+x)^{\lfloor n \rfloor}}{x^{\lfloor n \rfloor + m + 1}} 
Y\left((1+x)^{L(0)} u, x\right) v \\
&= \sum_{m=0}^{\lfloor n \rfloor} (-1)^m \binom{\lfloor n \rfloor + m}{\lfloor n \rfloor} 
\operatorname{Res}_z \frac{e^{(\lfloor n \rfloor + 1) z}}{(e^z - 1)^{\lfloor n \rfloor + m + 1}} 
Y\left(e^{z L(0)} u, e^z - 1\right) v \\
&= \sum_{m=0}^{\lfloor n \rfloor} (-1)^m \binom{\lfloor n \rfloor + m}{\lfloor n \rfloor} 
\operatorname{Res}_z \frac{e^{(\lfloor n \rfloor + 1) z}}{(e^z - 1)^{\lfloor n \rfloor + m + 1}} 
Y[u, z] v.
\end{align*}
Similarly, for any $u \in V^r$ and $v \in V$, we have:
\begin{align*}
u \circ_{g,n} v 
&= \operatorname{Res}_x \frac{(1+x)^{\delta_{\bar{n}}(r)+\lfloor n \rfloor + r/T - 1}}{x^{2\lfloor n \rfloor + \delta_{\bar{n}}(r) + \delta_{\bar{n}}(T - r) + 1}} 
Y\left((1+x)^{L(0)} u, x\right) v \\
&= \operatorname{Res}_z \frac{e^{(\delta_{\bar{n}}(r)+\lfloor n \rfloor + r/T) z}}{(e^z - 1)^{2\lfloor n \rfloor + \delta_{\bar{n}}(r) + \delta_{\bar{n}}(T - r) + 1}} 
Y\left(e^{z L(0)} u, e^z - 1\right) v \\
&= \operatorname{Res}_z \frac{e^{(\delta_{\bar{n}}(r)+\lfloor n \rfloor + r/T) z}}{(e^z - 1)^{2\lfloor n \rfloor + \delta_{\bar{n}}(r) + \delta_{\bar{n}}(T - r) + 1}} 
Y[u, z] v.
\end{align*}
From Theorem 4.2.1 of \cite{Z1}, we also know that $L[-1] = L(-1) + L(0)$. Therefore, by the definitions of $A_{g,n}(V, \omega)$ and $\tilde{A}_{g,n}(\exp(V, \omega))$, it follows that
$$
A_{g,n}(V, \omega) = \tilde{A}_{g,n}(\exp(V, \omega)).
$$
This completes the proof.
\end{proof}
\begin{thm}\label{thm4.2}
Let $(V, \omega)$ be a vertex operator algebra with an automorphism $g$ of finite order $T$, and let $n \in (1/T) \mathbb{N}$. Then the associative algebra $\tilde{A}_{g,n}(V, \omega)$ is isomorphic to the associative algebra $A_{g,n}(V, \omega)$.
\end{thm}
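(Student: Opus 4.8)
The plan is to combine Lemma~\ref{lem4.1} with the principle that $\tilde A_{g,n}$ depends only on the $g$-equivariant vertex algebra structure, not on any conformal vector. By Lemma~\ref{lem4.1}, $A_{g,n}(V,\omega)=\tilde A_{g,n}(\exp(V,\omega))$, where $\exp(V,\omega)=(V,Y[\cdot\,],\mathbf 1)$ has vertex map $Y[u,z]=Y(e^{zL(0)}u,e^z-1)$ and translation operator $L[-1]=L(-1)+L(0)$. Since $\tilde A_{g,n}(V,\omega)$ in the statement is $\tilde A_{g,n}$ applied to the original vertex algebra $(V,Y,\mathbf 1)$, whose translation operator is $\mathcal D=L(-1)$, it suffices to construct an algebra isomorphism
$$\tilde A_{g,n}(V,Y)\;\cong\;\tilde A_{g,n}(V,Y[\cdot\,]).$$

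First I would record the functoriality of the construction. Let $\phi\colon(V_1,Y_1,\mathbf 1_1)\to(V_2,Y_2,\mathbf 1_2)$ be a vertex algebra isomorphism intertwining finite-order automorphisms, i.e.\ $\phi g_1=g_2\phi$. Then $\phi$ preserves the eigenspace decompositions, $\phi(V_1^r)=V_2^r$; it carries the canonical translation operator $\mathcal D_1$ (given by $\mathcal D_1u=u_{-2}\mathbf 1_1$) to $\mathcal D_2$, because $\phi(\mathbf 1_1)=\mathbf 1_2$; and $\phi Y_1(u,x)v=Y_2(\phi u,x)\phi v$. As the defining expressions for $\diamond_{g,n}$, for $\bullet_{g,n}$, and for the generators $\mathcal Du$ of $\tilde O_{g,n}$ involve only $Y$, $\mathbf 1$, $\mathcal D$ and the index $r$ of each $g$-homogeneous vector, it follows that $\phi(\tilde O_{g_1,n}(V_1))=\tilde O_{g_2,n}(V_2)$ and that $\phi$ intertwines the two products $\bullet_{g,n}$. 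Hence $\phi$ descends to an isomorphism $\tilde A_{g_1,n}(V_1)\cong\tilde A_{g_2,n}(V_2)$.

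Next I would apply this to $(V_1,Y_1)=(V,Y)$, $(V_2,Y_2)=(V,Y[\cdot\,])$ and $g_1=g_2=g$. The key external input is the classical fact (Zhu \cite{Z1}) that $(V,Y,\mathbf 1,\omega)$ and $(V,Y[\cdot\,],\mathbf 1,\tilde\omega)$ are isomorphic conformal vertex algebras, the isomorphism $\phi$ being realized as an operator built from the Virasoro modes $L(m)$, $m\ge 1$ (possibly composed with a factor $a^{L(0)}$ for a scalar $a$); these modes act locally nilpotently, so $\phi$ is well defined on all of $V$. Because $g(\omega)=\omega$, the automorphism $g$ commutes with every $L(m)$, hence with $\phi$, so $\phi g=g\phi$ and $\phi(V^r)=V^r$. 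The previous paragraph then yields $\tilde A_{g,n}(V,Y)\cong\tilde A_{g,n}(V,Y[\cdot\,])$, and chaining with Lemma~\ref{lem4.1} gives
$$\tilde A_{g,n}(V,\omega)=\tilde A_{g,n}(V,Y)\cong\tilde A_{g,n}(V,Y[\cdot\,])=\tilde A_{g,n}(\exp(V,\omega))=A_{g,n}(V,\omega).$$

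The functoriality verification is routine bookkeeping. I expect the crux to be the key input itself: producing the isomorphism between $(V,Y)$ and $\exp(V,\omega)$ in a form that manifestly commutes with $g$ — that is, as an operator assembled from the Virasoro modes — and confirming that it sends $\mathcal D=L(-1)$ to $L[-1]=L(-1)+L(0)$, so that the $\mathcal Du$-generators of the two ideals correspond. Once this is in place, the remaining identifications are exactly those already carried out in Lemma~\ref{lem4.1}.
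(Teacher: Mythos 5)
Your proposal is correct and follows essentially the same route as the paper: Lemma~\ref{lem4.1} combined with the fact that $(V,Y)$ and $\exp(V,\omega)$ are isomorphic as vertex algebras via an operator assembled from Virasoro modes, which commutes with $g$ because $g(\omega)=\omega$. The paper supplies your ``key external input'' concretely through Huang's change-of-variable formula, $Y[u,z]=e^{-L_+(B)}Y\left(e^{L_+(B)}u,z\right)e^{L_+(B)}$ with $L_+(B)=\sum_{j}B_jL(j)$, and leaves implicit the functoriality and $g$-equivariance bookkeeping that you spell out explicitly.
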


\begin{proof}
Let $B_j$ (for $j \in \mathbb{N}$) be rational numbers defined by the equation
$$
\log(1+y) = \left(\exp\left( \sum_{j \in \mathbb{N}} B_j y^{j+1} \frac{\partial}{\partial y} \right)\right) y.
$$
We denote the operator $\sum_{j \in \mathbb{N}} B_j L(j)$ by $L_+(B)$. By the change-of-variable formula in \cite{H1}, we have
$$
Y[u, z] = Y\left(e^{z L(0)} u, e^z - 1\right) = e^{-L_+(B)} Y\left(e^{L_+(B)} u, z\right) e^{L_+(B)}.
$$
This shows that $\exp(V, \omega)$ is isomorphic to $(V, \omega)$ as a vertex algebra. Consequently, $\tilde{A}_{g,n}(V, \omega)$ is isomorphic to $\tilde{A}_{g,n}(\exp(V, \omega))$.
By Lemma~\ref{lem4.1}, we have $A_{g,n}(V, \omega) = \tilde{A}_{g,n}(\exp(V, \omega))$. Therefore, it follows that $A_{g,n}(V, \omega)$ is isomorphic to $\tilde{A}_{g,n}(V, \omega)$, completing the proof.
\end{proof}

\begin{lem}\label{lem4.3}
Let $(V, \omega)$ and $(V, \omega')$ be two vertex operator algebra structures on the same underlying vertex algebra $V$, and let $g$ be an automorphism of both structures of order $T$. For any $n \in (1/T) \mathbb{N}$, the associative algebra $A_{g,n}(V, \omega)$ is isomorphic to $A_{g,n}(V, \omega')$.
\end{lem}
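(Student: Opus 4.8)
The plan is to route the comparison through the conformal-vector-free algebra $\tilde{A}_{g,n}$ and to invoke Theorem~\ref{thm4.2} at both ends. The crucial observation is that the entire construction of $\tilde{A}_{g,n}(V)$ in Section~3 --- the products $\diamond_{g,n}$ and $\bullet_{g,n}$, the subspace $\tilde{O}_{g,n}(V)$, and the resulting quotient algebra --- refers only to the data $(V, Y, \mathbf{1})$, the automorphism $g$, and the index $n$. Indeed, the operations $\diamond_{g,n}$ and $\bullet_{g,n}$ are written purely in terms of $Y(u,x)$ together with the integers $r$, $\lfloor n\rfloor$, $\bar{n}$; no weight grading or conformal vector enters. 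The operator $\mathcal{D}$ appearing in the definition of $\tilde{O}_{g,n}(V)$ is intrinsic to the vertex algebra, since $\mathcal{D}v = v_{-2}\mathbf{1}$, and the decomposition $V = \bigoplus_{r} V^r$ that supplies $r$ is the eigenspace decomposition of the linear operator $g$.

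First I would note that, by hypothesis, $(V,\omega)$ and $(V,\omega')$ are two vertex operator algebra structures on one and the same underlying vertex algebra $(V, Y, \mathbf{1})$, and $g$ is an automorphism of both. Viewed as a linear automorphism of $(V, Y, \mathbf{1})$, $g$ is literally the same map in the two cases; in particular it has the same order $T$, induces the same grading $V = \bigoplus_{r=0}^{T-1} V^r$, and leaves the same operator $\mathcal{D}$. Consequently the products $\diamond_{g,n}$, $\bullet_{g,n}$ and the ideal $\tilde{O}_{g,n}(V)$ coincide for the two conformal structures, so that
$$
\tilde{A}_{g,n}(V,\omega) = \tilde{A}_{g,n}(V,\omega')
$$
as associative algebras.

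It then remains to compare each side with the Zhu-type algebra $A_{g,n}$. Since $(V,\omega)$ and $(V,\omega')$ are vertex operator algebras, they have integer conformal weights, so $A_{g,n}(V,\omega)$ and $A_{g,n}(V,\omega')$ are defined. Applying Theorem~\ref{thm4.2} to each structure gives the isomorphisms $A_{g,n}(V,\omega) \cong \tilde{A}_{g,n}(V,\omega)$ and $A_{g,n}(V,\omega') \cong \tilde{A}_{g,n}(V,\omega')$. Composing these with the equality displayed above yields
$$
A_{g,n}(V,\omega) \cong \tilde{A}_{g,n}(V,\omega) = \tilde{A}_{g,n}(V,\omega') \cong A_{g,n}(V,\omega'),
$$
which is the desired isomorphism.

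As for the main difficulty: the argument is formally short, and the only genuine content is the verification --- already implicit in Section~3 --- that nothing in the definition of $\tilde{A}_{g,n}$ secretly depends on the conformal vector. I expect the one point deserving care is confirming that an automorphism of $(V,\omega)$ and an automorphism of $(V,\omega')$ refer to the same vertex algebra automorphism $g$, so that the eigenspace gradings and the operator $\mathcal{D}$ agree; once this is pinned down, the chain of isomorphisms above is immediate.
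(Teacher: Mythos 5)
Your proof is correct and follows essentially the same route as the paper: both identify $\tilde{A}_{g,n}(V,\omega)$ with $\tilde{A}_{g,n}(V,\omega')$ because the construction of $\tilde{A}_{g,n}$ depends only on the underlying vertex algebra $(V,Y,\mathbf{1})$, the automorphism $g$, and $n$, and then apply Theorem~\ref{thm4.2} on each side to transfer the equality to an isomorphism $A_{g,n}(V,\omega)\cong A_{g,n}(V,\omega')$. Your explicit verification that $g$, the eigenspace decomposition, and $\mathcal{D}$ are literally the same for both conformal structures is simply a more careful spelling-out of what the paper states in one line.
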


\begin{proof}
Since $(V, \omega)$ and $(V, \omega')$ are equal as vertex algebras, we have
$$
\tilde{A}_{g,n}(V, \omega) = \tilde{A}_{g,n}(V, \omega').
$$
Now, applying Theorem~\ref{thm4.2}, it follows that $A_{g,n}(V, \omega)$ is isomorphic to $A_{g,n}(V, \omega')$. This completes the proof.
\end{proof}

From Lemma~\ref{lem4.3} and Theorem~\ref{thm2.9}(4), we immediately obtain the following result.

\begin{thm}\label{thm4.4}
Let $(V, \omega)$ and $(V, \omega')$ be two vertex operator algebra structures on the same underlying vertex algebra $V$, and let $g$ be an automorphism of both structures of finite order $T$. Then $(V, \omega)$ is $g$-rational if and only if $(V, \omega')$ is $g$-rational. Moreover, the $g$-rationality of a vertex operator algebra is independent of the choice of the conformal vector.
\end{thm}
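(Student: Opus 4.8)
The plan is to derive this statement directly from the two results that precede it, treating it as a clean corollary rather than a fresh computation. The essential input is Lemma~\ref{lem4.3}, which already establishes that for every $n \in (1/T)\N$ the associative algebra $A_{g,n}(V,\omega)$ is isomorphic to $A_{g,n}(V,\omega')$, together with the intrinsic characterization of $g$-rationality furnished by Theorem~\ref{thm2.9}(4).

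First I would fix an arbitrary $n \in (1/T)\N$ and apply Lemma~\ref{lem4.3} to obtain an algebra isomorphism $A_{g,n}(V,\omega) \cong A_{g,n}(V,\omega')$. Next I would observe that both finite-dimensionality and semisimplicity are invariant under isomorphism of associative algebras; hence $A_{g,n}(V,\omega)$ is finite-dimensional and semisimple if and only if $A_{g,n}(V,\omega')$ is. Since $n$ was arbitrary, the family $\{A_{g,n}(V,\omega)\}_{n}$ consists entirely of finite-dimensional semisimple algebras precisely when the family $\{A_{g,n}(V,\omega')\}_{n}$ does.

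Finally, I would invoke Theorem~\ref{thm2.9}(4) in both directions: $(V,\omega)$ is $g$-rational exactly when every $A_{g,n}(V,\omega)$ is finite-dimensional and semisimple, and likewise $(V,\omega')$ is $g$-rational exactly when every $A_{g,n}(V,\omega')$ is. Chaining these equivalences with the observation of the previous paragraph yields that $(V,\omega)$ is $g$-rational if and only if $(V,\omega')$ is, which is precisely the asserted independence of $g$-rationality from the choice of conformal vector.

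As for difficulty, there is essentially no obstacle at this final stage: all the substantive work lies upstream, in the construction of the conformal-structure-independent algebras $\tilde{A}_{g,n}(V)$ and in Theorem~\ref{thm4.2} identifying $\tilde{A}_{g,n}(V,\omega)$ with $A_{g,n}(V,\omega)$, which together underpin Lemma~\ref{lem4.3}. The only point meriting any attention is that Theorem~\ref{thm2.9}(4) must be applied for the full range $n \in (1/T)\N$ at once rather than for a single index, since $g$-rationality is equivalent to the semisimplicity of the entire family, not of any one member.
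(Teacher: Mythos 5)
Your proposal is correct and follows exactly the paper's own route: the paper states Theorem~\ref{thm4.4} as an immediate consequence of Lemma~\ref{lem4.3} combined with Theorem~\ref{thm2.9}(4). Your write-up merely spells out the isomorphism-invariance of finite-dimensionality and semisimplicity and the need to quantify over all $n \in (1/T)\N$, both of which the paper leaves implicit.
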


We now proceed to show that the $g$-regularity of a vertex operator algebra is also independent of the choice of the conformal vector.

\begin{thm}\label{thm4.5}
Let $(V, \omega)$ and $(V, \omega')$ be two vertex operator algebra structures on the same underlying vertex algebra $V$, and let $g$ be an automorphism of both structures of finite order $T$. Then $(V, \omega)$ is $g$-regular if and only if $(V, \omega')$ is $g$-regular. Moreover, the $g$-regularity of a vertex operator algebra is independent of the choice of the conformal vector.
\end{thm}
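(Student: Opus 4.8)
The plan is to characterise $g$-regularity by two finiteness conditions that are each transparently independent of the conformal vector, and then invoke Theorem~\ref{thm4.4}. A direct attack---using $g$-regularity of $(V,\omega)$ to decompose an arbitrary weak $g$-twisted module (the very notion of weak $g$-twisted module depends only on $(V,Y,\mathbf 1)$ and $g$, not on $\omega$) and then checking that each irreducible summand is again ordinary with respect to $\omega'$---stumbles on the fact that an irreducible weak module need not be admissible, hence ordinary, merely because $V$ is $g$-rational. The condition that closes this gap is $C_2$-cofiniteness. Accordingly, the key input I would use is the twisted analogue of the characterisation of Li and of Abe--Buhl--Dong: a vertex operator algebra is $g$-regular if and only if it is $g$-rational and $C_2$-cofinite, where $C_2(V)=\operatorname{span}\{u_{-2}v\mid u,v\in V\}$ and $C_2$-cofiniteness means $\dim V/C_2(V)<\infty$.

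Granting this characterisation, the argument becomes formal. First I would note that $u_{-2}v$ is the coefficient of $x$ in $Y(u,x)v$, so that the subspace $C_2(V)$, and therefore the number $\dim V/C_2(V)$, is determined entirely by the vertex algebra structure $(V,Y,\mathbf 1)$ and is insensitive to the choice of conformal vector. Hence $(V,\omega)$ is $C_2$-cofinite if and only if $(V,\omega')$ is. Second, Theorem~\ref{thm4.4} gives that $(V,\omega)$ is $g$-rational if and only if $(V,\omega')$ is $g$-rational.

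Putting these together with the characterisation, $(V,\omega)$ is $g$-regular precisely when it is $g$-rational and $C_2$-cofinite, which holds precisely when $(V,\omega')$ is $g$-rational and $C_2$-cofinite, i.e.\ precisely when $(V,\omega')$ is $g$-regular. By the symmetry between $\omega$ and $\omega'$ it would in fact suffice to establish a single implication, the reverse following by interchanging their roles.

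The step I expect to be the main obstacle is the characterisation itself, this being the sole non-formal ingredient. Its easy direction is that $g$-regularity forces $g$-rationality and $C_2$-cofiniteness; the substantive direction, that $g$-rationality together with $C_2$-cofiniteness yields $g$-regularity, rests on PBW-type spanning set arguments for weak $g$-twisted modules and may come with an auxiliary grading assumption. The point requiring care is that any such assumption be itself independent of the conformal vector, so that it transfers between $(V,\omega)$ and $(V,\omega')$; once this is secured in the twisted setting, the conformal-vector independence of $g$-regularity follows directly from that of $g$-rationality and of $C_2$-cofiniteness.
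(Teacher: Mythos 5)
There is a genuine gap: your entire argument rests on the ``key input'' that a vertex operator algebra is $g$-regular if and only if it is $g$-rational and $C_2$-cofinite, and this input is never proved --- you explicitly defer it as ``the main obstacle.'' A proof that reduces the theorem to an unestablished theorem is not a proof. Worse, the input is doubtful in the generality required here. Even in the untwisted case, the equivalence of Li and Abe--Buhl--Dong is proved under CFT-type hypotheses ($V=\bigoplus_{n\geq 0}V_n$, $V_0=\C\one$), which the present theorem does not assume (the paper only requires integer weights, $\dim V_n<\infty$, and $V_n=0$ for $n$ sufficiently small). In the twisted case, the full equivalence ($g$-regular $\Leftrightarrow$ $g$-rational and $C_2$-cofinite, for an arbitrary finite-order automorphism $g$ and without CFT-type assumptions) is not a result you can simply quote; in particular the direction ``$g$-regular $\Rightarrow$ $C_2$-cofinite'' has no obvious twisted analogue, since $g$-regularity constrains only $g$-twisted modules while $C_2(V)$ lives in the untwisted theory. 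So even granting the strongest published versions of the characterisation, your argument would prove a strictly weaker statement than the one asserted. Your observation that $C_2(V)$ and the category of weak $g$-twisted modules depend only on $(V,Y,\one)$ is correct, but it carries none of the real weight.

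The paper's own proof avoids all of this and is elementary given what has already been established. It runs: $g$-regularity of $(V,\omega)$ gives $g$-rationality and (via Theorem~\ref{thm2.9}) a finite complete list $W_1,\dots,W_r$ of irreducible weak $g$-twisted modules, all ordinary for $\omega$; the isomorphism $A_g(V,\omega)\cong A_g(V,\omega')$ of Lemma~\ref{lem4.3}, together with the twisted Zhu correspondence, shows $(V,\omega')$ has exactly $r$ irreducible admissible $g$-twisted modules $M_1,\dots,M_r$, which by Theorems~\ref{thm2.9} and~\ref{thm4.4} are ordinary for $\omega'$; since weak $g$-twisted modules form the \emph{same} category for both conformal structures, the $M_i$ must exhaust the $W_j$ up to isomorphism, so every weak $g$-twisted module decomposes into irreducible ordinary $g$-twisted $(V,\omega')$-modules. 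If you want to salvage your strategy, you would need to actually prove the twisted characterisation in the stated generality --- a substantially harder task than the theorem itself --- whereas the counting argument through the twisted Zhu algebras requires nothing beyond what the paper has already built.
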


\begin{proof}
Assume that $(V, \omega)$ is $g$-regular. Then by Theorem~\ref{thm2.9}, every irreducible weak $g$-twisted $(V, \omega)$-module is an irreducible ordinary $g$-twisted $(V, \omega)$-module  and  $(V, \omega)$ is $g$-rational.

Furthermore, according to Theorem~\ref{thm2.9}, there are only finitely many irreducible  weak $g$-twisted $(V, \omega)$-modules up to isomorphism. Let $W_1, \ldots, W_r$ be a complete set of representatives of the isomorphism classes of irreducible  weak $g$-twisted $(V, \omega)$-modules. Since $(V, \omega)$ is $g$-regular, each $W_i$ is in fact an irreducible ordinary $g$-twisted $(V, \omega)$-module.

By Zhu's one-to-one correspondence for vertex operator algebras (see \cite{DLM2}), the associative algebra $A_g(V, \omega)$ has exactly $r$ irreducible modules up to isomorphism. Applying Lemma~\ref{lem4.3}, it follows that $A_g(V, \omega')$ also has exactly $r$ irreducible modules up to isomorphism.

Therefore, there exist exactly $r$ irreducible admissible $g$-twisted $(V, \omega')$-modules $M_1, \ldots, M_r$ up to isomorphism. By Theorems~\ref{thm2.9} and \ref{thm4.4}, these modules are also irreducible ordinary $g$-twisted $(V, \omega')$-modules. In particular, they are nonisomorphic irreducible weak $g$-twisted $(V, \omega')$-modules.

Moreover, since these modules are also irreducible weak $g$-twisted $(V, \omega)$-modules, they form a complete set of isomorphism-equivalence class representatives of irreducible weak $g$-twisted $(V, \omega)$-modules.

Hence, any weak $g$-twisted $(V, \omega)$-module is a direct sum of some of the $M_1, \ldots, M_r$, which implies that the same holds for weak $g$-twisted $(V, \omega')$-modules. Therefore, $(V, \omega')$ is $g$-regular, as desired.
\end{proof}
\begin{defi}
Let $(V, \omega)$ be a conformal vertex algebra with integer conformal weights and let $g$ be an automorphism of $(V, \omega)$ of order $T$. A weak $g$-twisted $V$-module $W$ is called a lowest weight weak $g$-twisted $V$-module if 
$
W = \bigoplus_{n \in (1/T)\mathbb{N}} W_{(\lambda+n)}
$
for some $\lambda \in \mathbb{C}$, where 
$$
W_{(\lambda+n)} = \{w \in W \mid L_W(0)w = (\lambda + n)w\}.
$$
Moreover, $W_{(\lambda)}$ is an irreducible $A_g(V, \omega)$-module and generates $W$ as a weak $g$-twisted $V$-module.
\end{defi}

Finally, we prove that the twisted fusion rules of a vertex operator algebra are also independent of the choice of the conformal vector.

\begin{thm}\label{thm4.6}
Let $(V, \omega)$ and $(V, \omega')$ be two vertex operator algebra structures on the same underlying vertex algebra $V$, and let $g_1, g_2, g_3$ be mutually commuting automorphisms of finite order of both structures. Let $(W_k, Y_{W_k})$ be a weak $g_k$-twisted $V$-module for $k = 2, 3$, and suppose $W_1$ is a lowest weight weak $g_1$-twisted $(V, \omega)$-module. Then there is a canonical isomorphism between the spaces of intertwining operators:
$$
I_{(V, \omega)}\binom{W_3}{W_1 \,\, W_2} \quad \text{and} \quad I_{(V, \omega')}\binom{W_3}{W_1 \,\, W_2}.
$$
\end{thm}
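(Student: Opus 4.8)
The plan is to exploit the fact that, among the three defining conditions for an intertwining operator, only the $L(-1)$-derivative property (condition (3)) refers to the conformal structure; the lower-truncation condition (1) and the Jacobi identity (2) are phrased entirely in terms of the vertex algebra structure $Y$ and the eigenspace decomposition of $V$ under $g_1,g_2$, all of which are identical for $(V,\omega)$ and $(V,\omega')$. Thus a linear map $\mathcal{Y}\colon W_1 \to (\operatorname{Hom}(W_2,W_3))\{x\}$ satisfies (1) and (2) relative to $(V,\omega)$ if and only if it does so relative to $(V,\omega')$, and the whole problem reduces to comparing condition (3), which involves $L_{W_1}(-1)=\omega_0$ for $(V,\omega)$ and $L'_{W_1}(-1)=\omega'_0$ for $(V,\omega')$. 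First I would pin down the precise relation between these two operators on $W_1$.

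Set $a=\omega-\omega'$. Since $g_1$ and $g_2$ fix both conformal vectors, $a\in V^{(0,0)}$, so $Y_{W_1}(a,x)$ has integral powers of $x$ and $a_0$ is well defined. The conformal-vector axiom forces $\omega_0=L(-1)=\mathcal{D}=L'(-1)=\omega'_0$ on $V$, hence $a_0=0$ as an operator on $V$; equivalently $a_0v=0$ for every $v\in V$. Feeding this into the commutator formula $[a_0,Y_{W_1}(v,x)]=Y_{W_1}(a_0v,x)=0$ shows that $a_0$ commutes with the entire action of $V$ on $W_1$, and in particular with $L_{W_1}(0)=\omega_1$, so $a_0$ preserves the lowest-weight grading of $W_1$. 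Restricting to the lowest-weight space $W_{(\lambda)}$, which is by hypothesis an irreducible $A_{g_1}(V,\omega)$-module, Schur's lemma gives $a_0|_{W_{(\lambda)}}=c\,\operatorname{id}$ for a scalar $c\in\C$; since $W_{(\lambda)}$ generates $W_1$ and $a_0$ commutes with all modes, the scalar propagates and $a_0=c\,\operatorname{id}_{W_1}$, that is, $L'_{W_1}(-1)=L_{W_1}(-1)-c$ on $W_1$.

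With this in hand, I would define the comparison map $\Phi\colon \mathcal{Y}\mapsto e^{-cx}\mathcal{Y}(\cdot,x)$ and show it carries $I_{(V,\omega)}\binom{W_3}{W_1\ W_2}$ isomorphically onto $I_{(V,\omega')}\binom{W_3}{W_1\ W_2}$, with inverse $\mathcal{Y}'\mapsto e^{cx}\mathcal{Y}'(\cdot,x)$. Conditions (1) and (2) are preserved because multiplication by $e^{-cx_2}=\sum_{k\geq0}\frac{(-c)^k}{k!}x_2^k$ involves only nonnegative powers of $x_2$, so the lower-truncation condition survives, and because every term of the Jacobi identity carries a single factor $\mathcal{Y}(\cdot,x_2)$, so that the whole identity is simply multiplied through by $e^{-cx_2}$ and continues to hold. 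For condition (3), a direct computation gives $\frac{d}{dx}\bigl(e^{-cx}\mathcal{Y}(w,x)\bigr)=-c\,e^{-cx}\mathcal{Y}(w,x)+e^{-cx}\mathcal{Y}(L_{W_1}(-1)w,x)$, which, using $L'_{W_1}(-1)=L_{W_1}(-1)-c$ and linearity, equals $(\Phi\mathcal{Y})(L'_{W_1}(-1)w,x)$; thus $\Phi\mathcal{Y}$ satisfies the $L(-1)$-derivative property for $(V,\omega')$ exactly when $\mathcal{Y}$ satisfies it for $(V,\omega)$. When $c=0$ the map $\Phi$ is the identity and the two spaces literally coincide.

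The main obstacle is the middle step: showing that the difference $a_0=\omega_0-\omega'_0$ collapses to a single scalar on $W_1$. This is precisely where the hypothesis that $W_1$ is a lowest weight module is indispensable, since it supplies an irreducible generating subspace on which Schur's lemma applies and from which the scalar spreads to all of $W_1$. The remaining points, namely the stability of the Jacobi identity under the $e^{-cx_2}$ twist and the bookkeeping that lower truncation is preserved, are routine once the operator identity $L'_{W_1}(-1)=L_{W_1}(-1)-c$ is in place.
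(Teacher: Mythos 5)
Your proposal is correct and follows essentially the same route as the paper: set $a=\omega-\omega'$, observe $a_0=\mathcal{D}-\mathcal{D}=0$ on $V$, use the commutator formula $[(a_{W_1})_0,Y_{W_1}(v,x)]=Y_{W_1}(a_0v,x)=0$ together with irreducibility of the lowest-weight space to show $(a_{W_1})_0$ acts as a single scalar $c$ on all of $W_1$, and then check that multiplication by $e^{\mp cx}$ gives mutually inverse maps between the two spaces of intertwining operators. The one point where the paper is more careful than you is the appeal to Schur's lemma: since $W_{(\lambda)}$ need not be finite-dimensional, one needs the countable-dimension (Dixmier) version, which the paper justifies by noting that $A_{g_1}(V,\omega)$ is a quotient of the countable-dimensional space $V$, so that the irreducible module $W_{(\lambda)}$ is itself of countable dimension; with that refinement your argument matches the paper's.
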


\begin{proof}
Let $a = \omega - \omega' \in V$. Then
$$
a_0 = \omega_0 - \omega'_0 = L(-1) - L'(-1) = \D-\D = 0 \quad \text{on } V.
$$
Let $\lambda$ denote the lowest $L(0)$-weight of $W_1$. Then $(W_1)_{(\lambda)}$ is an irreducible $A_{g_1}(V, \omega)$-module and generates $W_1$ as a weak $g_1$-twisted $V$-module.
From the identity
$$
[(a_{W_1})_0, Y_{W_1}(\omega, z)] = Y_{W_1}(a_0 \omega, z) = 0,
$$
we deduce that
$
[(a_{W_1})_0, L_{W_1}(0)] = 0,
$
which implies $(a_{W_1})_0 (W_1)_{(\lambda)} \subset (W_1)_{(\lambda)}$. Since $A_{g_1}(V, \omega)$ has countable dimension as a quotient of $V$, it follows that $(W_1)_{(\lambda)}$ also has countable dimension. Therefore, $(a_{W_1})_0$ acts on $(W_1)_{(\lambda)}$ as a scalar $\alpha \in \mathbb{C}$.
Since $(W_1)_{(\lambda)}$ generates $W_1$ and for any $v \in V$,
$$
[(a_{W_1})_0, Y_{W_1}(v, z)] = Y_{W_1}(a_0 v, z) = 0,
$$
it follows that $(a_{W_1})_0$ acts on all of $W_1$ as the scalar $\alpha$, i.e.,
$$
L_{W_1}(-1) - L'_{W_1}(-1) = \alpha \quad \text{on } W_1.
$$
Now, let $\mathcal{Y}(\cdot, z) \in I_{(V, \omega)}\binom{W_3}{W_1 \,\, W_2}$. Then for any $w \in W_1$, we have
$
\mathcal{Y}(L_{W_1}(-1)w, z) = \frac{d}{dz} \mathcal{Y}(w, z).
$
Therefore,
\begin{align*}
e^{-\alpha z} \mathcal{Y}(L'_{W_1}(-1)w, z) 
&= e^{-\alpha z} \mathcal{Y}(L_{W_1}(-1)w, z) - \alpha e^{-\alpha z} \mathcal{Y}(w, z) \\
&= e^{-\alpha z} \frac{d}{dz} \mathcal{Y}(w, z) - \alpha e^{-\alpha z} \mathcal{Y}(w, z) \\
&= \frac{d}{dz} \left(e^{-\alpha z} \mathcal{Y}(w, z)\right).
\end{align*}
This shows that $e^{-\alpha z} \mathcal{Y}(\cdot, z) \in I_{(V, \omega')}\binom{W_3}{W_1 \,\, W_2}$. Similarly, for any $\mathcal{Y}'(\cdot, z) \in I_{(V, \omega')}\binom{W_3}{W_1 \,\, W_2}$, we have $e^{\alpha z} \mathcal{Y}'(\cdot, z) \in I_{(V, \omega)}\binom{W_3}{W_1 \,\, W_2}$.
Hence, multiplication by $e^{\pm \alpha z}$ defines mutually inverse linear maps between the spaces of intertwining operators, yielding a canonical isomorphism:
$$
I_{(V, \omega)}\binom{W_3}{W_1 \,\, W_2} \cong I_{(V, \omega')}\binom{W_3}{W_1 \,\, W_2}.
$$
This completes the proof.
\end{proof}

\section*{Acknowledgment}
This work is supported by the National Natural Science Foundation of China (Grant No. 12271406). The author would like to express his sincere gratitude to his supervisor, Professor Jianzhi Han, for his constant guidance and support throughout this research. The author also wishes to thank Professor Haisheng Li for an insightful observation regarding vertex operator algebras—specifically, that the rationality of such an algebra is independent of the choice of its conformal vector. This valuable insight was shared during a short lecture series on vertex operator algebras organized by Professor Haibo Chen at Jimei University.

%\bibliographystyle{plain}  
%\bibliography{refs}

\end{document}